\documentclass[12pt]{amsart}

\usepackage{graphicx}
\usepackage{amsmath,amssymb,amsthm,enumerate,stmaryrd}
\usepackage[colorlinks=true,citecolor=black,linkcolor=black,urlcolor=blue]{hyperref}
\usepackage[mathlines]{lineno}
\usepackage[section]{placeins}
\usepackage{tikz}

\usetikzlibrary{arrows}

\tikzset{
  every picture/.style=thick,
  every node/.style={circle, draw, inner sep=3pt, outer sep=0},
  every label/.style={draw=none, inner sep=0pt, outer sep=0},
  bluenode/.style={circle, draw=black, fill=blue!40, very thick, minimum size=6mm},
  whitenode/.style={circle, draw=black, fill=black!10, very thick, minimum size=6mm},
  squarednode/.style={rectangle, draw=red!60, fill=red!5, very thick, minimum size=5mm},
  every loop/.style={min distance=8mm},
  label distance=0.1cm
} 

\newtheorem{theorem}{Theorem}[section]
\newtheorem{lemma}[theorem]{Lemma}

\newtheorem{proposition}[theorem]{Proposition}

\theoremstyle{definition}
\newtheorem{definition}[theorem]{Definition}

\theoremstyle{remark}
\newtheorem{remark}[theorem]{Remark}

\numberwithin{equation}{section}

\title{}

\author[Lozano, Nasserasr, Wall]{ Jorge~Lozano, Shahla Nasserasr, Thomas~Wall}

\address[Jorge~Lozano]{~}
\email{jlozano1@ibm.com}
\address[Shahla~Nasserasr, Thomas~Wall]{School  of Mathematics \& Statistics, Rochester Institute of Technology, Rochester, NY, USA}
\email{shahla@mail.rit.edu, tjw3002@rit.edu}
\title{Claw-free cubic graphs and zero forcing}

\begin{document}


\title{Claw-free cubic graphs and zero forcing}

\subjclass[2010]{05C50, 05C69}
\keywords{Claw-free, Independent set, Zero forcing set}

\begin{abstract}
A \emph{claw-free cubic graph} is a cubic graph with no induced subgraph isomorphic to $K_{1,3}$. The zero forcing process begins with an initial set $S$ of colored vertices. At each step, a colored vertex with exactly one uncolored neighbor forces that neighbor to become colored. If repeated applications of this rule color every vertex of $G$, then $S$ is called a \emph{zero forcing set}. The minimum cardinality of a zero forcing set is the \emph{zero forcing number}, denoted by $Z(G)$. In this paper, we answer three open questions concerning upper bounds on the zero forcing number of claw-free cubic graphs in \cite{DH}.
    We characterize the connected claw-free cubic graphs satisfying $Z(G)=\alpha(G)+1$, where $\alpha(G)$ is the independence number. In addition, we establish the improved upper bound $Z(G)\leq \frac{T}{2}+D+2$ for claw-free cubic graphs with Hamiltonian contraction multigraphs, where $D$ is the number of diamonds and $T$ is the number of triangles in $G$.
\end{abstract}
\maketitle

\section{Introduction Known Results}
This section introduces the standard notation and definitions for the zero forcing and graph coloring processes, consistent with the framework established in \cite{DH}. 
Let $G = (V(G), E(G))$ be a graph with vertex set $V(G)$ and edge set $E(G)$. 
Let $S \subseteq V$ be an initial set of colored vertices, called tokens. 
The \emph{forcing process} proceeds in discrete time steps according to the following rule: a colored vertex $v$ \emph{forces} an uncolored vertex $u$ to be colored if $u$ is the only uncolored neighbor of $v$. 
If iteratively applying this rule colors all vertices in $V(G)$, then $S$ is called a \emph{zero forcing set} of $G$. 
The \emph{zero forcing number} of $G$, denoted by $Z(G)$, is the minimum cardinality of a zero forcing set. 
A \emph{forcing chain} $v_1 \rightarrow v_2 \rightarrow \dots \rightarrow v_k$ represents a sequence of chronological forces. 

Separately, a \emph{proper coloring} of $G$ assigns colors to the vertices of $G$ such that no two adjacent vertices share the same color. 
The minimum number of colors required for a proper coloring of $G$ is the \emph{chromatic number} of $G$, denoted by $\chi(G)$. 
The maximum degree of $G$ is denoted by $\Delta(G)$.

An \emph{$H$-free} graph is a graph that does not contain $H$ as an induced subgraph. 
In particular, a $K_{1,3}$-free graph is called a \emph{claw-free} graph.
A $3$-regular graph is referred to as a \emph{cubic} graph.
It was shown in \cite{HL} that the vertex set of any connected claw-free cubic graph can be uniquely partitioned into subsets that induce either a triangle unit ($K_3$) or a diamond unit of $G$ ($K_4\backslash e$). 
The total number of triangles and diamonds in $G$ are denoted by $T$ and $D$, respectively. 

For a graph $G$ on $n$ vertices, we let $\mathcal{S}(G)$ be the set of real symmetric $n\times n$ matrices $A=[a_{ij}]$ such that for $i\neq j$, $a_{ij}$ is nonzero if and only if $\{i,j\}\in E(G)$. The maximum nullity of $G$ is $M(G)=\max\{\text{null}(A); A\in \mathcal{S}(G)\}.$

The following theorem is used to provide a lower bound for the zero forcing. 

\begin{theorem}[\cite{MR2645093}]\label{mlessthanZ}
For any graph $G$, $M(G)\leq Z(G)$.
\end{theorem}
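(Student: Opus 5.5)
The plan is to show that any matrix in $\mathcal{S}(G)$ has nullity at most $|S|$ for any zero forcing set $S$. Taking $S$ to be a minimum zero forcing set and then maximizing over matrices gives $M(G)\le Z(G)$. Fix $A\in\mathcal{S}(G)$ and a zero forcing set $S$ with $|S|=Z(G)$. The key claim is that any vector $x\in\ker A$ that vanishes on every vertex of $S$ must be the zero vector.

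Granting the claim, the result is linear algebra. Consider the linear map $\ker A\to\mathbb{R}^{S}$ that restricts a vector to its coordinates indexed by $S$. The claim says this map is injective, so
\[
\operatorname{null}(A)=\dim\ker A\le |S|=Z(G).
\]
Since $A$ was arbitrary in $\mathcal{S}(G)$, taking the maximum gives $M(G)\le Z(G)$.

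To prove the claim, follow the forcing process. Maintain the invariant that $x_u=0$ for every currently colored vertex $u$; it holds initially because $x$ vanishes on $S$. Suppose a colored vertex $v$ forces its unique uncolored neighbor $w$. Look at row $v$ of the equation $Ax=0$:
\[
a_{vv}x_v+\sum_{u\sim v}a_{vu}x_u=0 .
\]
Every term except $a_{vw}x_w$ vanishes by the invariant, since $v$ and all its neighbors other than $w$ are colored. So $a_{vw}x_w=0$. Because $\{v,w\}\in E(G)$, the entry $a_{vw}$ is nonzero, and therefore $x_w=0$. Coloring $w$ thus preserves the invariant. Since $S$ is zero forcing, every vertex is eventually colored, so $x=0$.

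The only delicate point is that the diagonal entries $a_{vv}$ are unconstrained in $\mathcal{S}(G)$. This causes no trouble, because the forcing vertex $v$ is itself already colored, so $x_v=0$ and the diagonal term drops out. The essential input is the off-diagonal nonzero pattern, which guarantees $a_{vw}\neq0$. Beyond this, there is no serious obstacle.
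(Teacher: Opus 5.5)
Your proof is correct: forcing carries zeros through the rows of $Ax=0$, so restricting $\ker A$ to the coordinates in $S$ is injective and $\operatorname{null}(A)\le |S|$. The paper gives no proof of its own and simply cites \cite{MR2645093}; your argument is the standard null-vector propagation argument behind that result.
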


\begin{definition}[\textbf{Diamond Necklace} \cite{DH}]
For $m \geq 2$, a \emph{diamond necklace} $N_m$ is a connected, claw-free, cubic graph constructed from $m$ disjoint copies of diamond units $D_1, D_2, \dots, D_m$. Each unit $D_k$ has the vertex set $V(D_k) = \{a_k, b_k, c_k, d_k\}$, where $\{a_k, b_k\}$ is the unique missing edge. The units are linked cyclically by adding the edges $\{a_k, b_{k+1}\}$ for all $k \in [m]$, with indices taken modulo $m$. We denote the family of these graphs by $\mathcal{N}_{\text{cubic}} = \{ N_m \mid m \geq 2 \}$.  Both the zero forcing number and the independence number are known for diamond necklaces, and are given in the following lemma.
\end{definition}

\begin{lemma}\cite{DH}\label{Ncubic} If G $\in$ $\mathcal{N}_{cubic}$ with order $n$, then the following hold.  \end{lemma}

\begin{itemize}
\item $Z(G)=\frac{1}{4}n+2$
\item $\alpha(G)=\lfloor \displaystyle \frac{3}{8}n \rfloor $
\end{itemize}

\begin{definition}[\textbf{Diamond Bracelet} \cite{DH}]
For $m \geq 1$, a \emph{diamond bracelet} $B_m$ is obtained from a diamond necklace $N_{m+1}$ by replacing the diamond unit $D_{m+1}$ with a triangle unit $T_0$ where $V(T_0) = \{a_0, b_0, c_0\}$. The triangle is integrated into the graph by introducing the edges $\{b_0 ,b_1\}$ and $\{a_0, a_m\}$.
\end{definition}

\begin{definition}[\textbf{Diamond Chain} \cite{DH}]
For $m \geq 1$, a \emph{diamond chain} $L_m$ is obtained from a diamond necklace $N_{m+1}$ by replacing the diamond unit $D_{m+1}$ with two disjoint triangle units, $T_1$ and $T_2$. This structure is completed by adding an edge between $a_1$ and a vertex $t_1 \in V(T_1)$, and another edge between $b_m$ and a vertex $t_2 \in V(T_2)$.
\end{definition}

Figure~\ref{chain-bracelet} illustrates structural examples of these three graph families.

\begin{figure}[!h]
\begin{center}
\resizebox{0.8\textwidth}{!}{
\begin{tikzpicture}

\begin{scope} 

\node (l2) at (0.14,3.85) {};
\node (t2) at (1,4) {};
\node (r2) at (0.86,3.15) {};
\node (b2) at (0,3) {};
\draw (l2) --(t2)--(r2)--(b2)--(l2)--(r2);

\node (l1) at (0.14,1.15) {};
\node (b1) at (1,1) {};
\node (r1) at (0.86,1.85) {};
\node (t1) at (0,2) {};

\draw (l1)-- (t1)-- (r1) --(b1)--(l1) --(r1);

\node (t4) at (5.414,2) {};
\node (l4) at (4.554,1.85) {};
\node (r4) at (5.274,1.15) {};
\node (b4) at (4.414,1) {};

\draw (l4)--(t4)--(r4) --(b4)--(l4)--(r4);

\node (b3) at (5.414,3) {};
\node (l3) at (4.554,3.15) {};
\node (r3) at (5.274,3.85) {};
\node (t3) at (4.414,4) {};

\draw (l3)-- (t3)-- (r3) --(b3)--(l3) --(r3);

\node (l5) at (2,4.5) {};
\node (b5) at (2.707,4) {};
\node (t5) at (2.707,5) {};
\node (r5) at (3.414,4.5) {};

\draw (t5)--(r5)--(b5)--(l5)--(t5)--(b5);

\node (l6) at (2,0.5) {};
\node (b6) at (2.707,0) {};
\node (t6) at (2.707,1) {};
\node (r6) at (3.414,0.5) {};

\draw (t6)--(r6)--(b6)--(l6)--(t6)--(b6);

\draw (t1)--(b2);
\draw(t2)--(l5);
\draw (r5)--(t3);
\draw(b3)--(t4);

\draw(b4)--(r6);
\draw (l6)--(b1);

\end{scope}

\begin{scope} [xshift=7cm]
\node (l2) at (0.14,3.85) {};
\node (t2) at (1,4) {};
\node (r2) at (0.86,3.15) {};
\node (b2) at (0,3) {};
\draw (l2) --(t2)--(r2)--(b2)--(l2)--(r2);

\node (l1) at (0.14,1.15) {};
\node (b1) at (1,1) {};
\node (r1) at (0.86,1.85) {};
\node (t1) at (0,2) {};

\draw (l1)-- (t1)-- (r1) --(b1)--(l1) --(r1);

\node (t4) at (5.414,2) {};
\node (l4) at (4.554,1.85) {};
\node (r4) at (5.274,1.15) {};
\node (b4) at (4.414,1) {};

\draw (l4)--(t4)--(r4) --(b4)--(l4)--(r4);

\node (b3) at (5.414,3) {};
\node (l3) at (4.554,3.15) {};
\node (r3) at (5.274,3.85) {};
\node (t3) at (4.414,4) {};

\draw (l3)-- (t3)-- (r3) --(b3)--(l3) --(r3);

\node (l5) at (2,4.5) {};
\node (b5) at (2.707,4) {};
\node (t5) at (2.707,5) {};
\node (r5) at (3.414,4.5) {};

\draw (t5)--(r5)--(b5)--(l5)--(t5)--(b5);

\node (l) at(2.207,0.5){};
\node (r) at (3.207,0.5 ){};
\node (b) at (2.707,-0.36){};

\draw (l)--(r)--(b)--(l);

\draw (t1)--(b2);
\draw(t2)--(l5);
\draw (r5)--(t3);
\draw(b3)--(t4);
\draw(b4)--(r);
\draw (l)--(b1);


\end{scope}
\begin{scope} [xshift=15cm]

\node                      (tt1) at (0.375,1){};
\node                      (lt1) at (-0.125,0.293){};
\node                      (rt1) at (0.875,0.293){};
\draw (tt1) -- (lt1) -- (rt1) -- (tt1);

\node                      (bt2) at (0.375,4.5){};
\node                      (lt2) at (-0.125,5.207){};
\node                      (rt2) at (0.875,5.207){};
\draw (bt2) -- (lt2) -- (rt2) -- (bt2);

\node  (ld1) at (0,2) {};
\node  (td1) at (0.375,2.5) {};
\node  (rd1) at (0.75,2) {};
\node  (bd1) at (0.375,1.5) {};
\draw  (ld1) -- (td1) -- (rd1) -- (bd1) -- (ld1) -- (rd1);

\node  (ld2) at (0,3.5) {};
\node  (td2) at (0.375,4) {};
\node  (rd2) at (0.75,3.5) {};
\node  (bd2) at (0.375,3) {};
\draw  (ld2) -- (td2) -- (rd2) -- (bd2) -- (ld2) -- (rd2);

\draw (tt1)--(bd1);
\draw (td1)--(bd2);
\draw(td2)--(bt2);
\end{scope}

\end{tikzpicture}
}
\caption{Diamond necklace, diamond bracelet and diamond chain }
\label{chain-bracelet}

\end{center}\vspace{-15pt}
\end{figure} 
\bigskip

Davila and Henning~\cite{DH} introduced an algorithm to construct both an independent vertex set and a zero forcing set for connected, claw-free cubic graphs. 
This algorithm provides an alternative approach for advancing the zero forcing process when a forcing chain halts at either a triangle unit or a diamond unit. 
The execution of their algorithm relies on two localized reduction mechanisms for both of these cases: the \emph{triangle rule} and the \emph{diamond rule}, respectively. 

$\bold{Triangle\:rule}$ \cite{DH} After initiating the forcing game, suppose to be at a time step of the forcing process where a forcing chain halts.
We consider a triangle $T_k$ of $G$ with vertices  $V(T_k)=\{a_{k},b_{k},c_{k}\}$ where the only colored vertex is $a_{k}$.
Let $c_{k'}$ and $b_{k'}$ be the adjacent vertices of $c_k$ and $b_k$, respectively, where $c_{k'},b_{k'} \notin V(T_k)$. 
If neither $c_{k'}$ nor $b_{k'}$ are colored then we allow the forcing game to continue by coloring vertex $c_{k}$ and update $S$ as $S:=S\cup\{c_{k}\}$. We call this process, \emph{Case A } of the triangle rule.
If $c_{k'}$ is colored through its neighbor not in $T_k$, then $c_{k'}$ forces vertex $c_k$ and $c_{k}$ forces vertex $b_{k}$. Thus, all vertices of triangle $T_k$ become colored without updating the existing forcing set. We call this process, \emph{Case B } of the triangle rule.
Note all vertices of triangle $T_k$ end up colored regardless of which case is applied. 

$\bold{Diamond\:rule}$ \cite{DH}
Assume to be at a time step of the forcing process where a forcing chain halts.
Consider the diamond unit $D_k$ of $G$ with vertices $V(D_k)=\{a_{k},b_{k},c_{k},d_{k}\}$ where $\{a_k, b_k\}$ is the missing edge in $D_k$. 
Let $a_k$ be the only colored vertex in $D_k$, forced through its neighbor not in $D_k$. 
We allow the forcing game to proceed by updating the existing forcing set $S$ as $S:=S\cup\{ c_{k}\}$. Thus, $a_k$ forces $d_k$ and $d_k$ forces ${b_k}$.
All vertices of diamond $D_k$ end up colored.

Davila and Henning in \cite{DH} proved that for every connected claw-free cubic graph  $Z(G) \leq \alpha(G)+1$. The authors posted the following questions regarding this inequality. \\ 
Do the following inequalities hold for connected claw-free cubic graphs with sufficiently large number of vertices $n$?
\begin{enumerate}
\item\label{z<alpha} $Z(G) \leq \alpha(G)$ 
\item\label{z<3/4alpha} $Z(G)\leq \displaystyle \frac{3}{4}\alpha(G)$
\item\label{z<3/10n} $Z(G)\leq \displaystyle \frac{3}{10}n$. 
\end{enumerate}

He et al. answer the inequality \ref{z<alpha} affirmatively in \cite{He} by showing that the equation $Z(G) = \alpha(G)+1$ holds only for a finite set of connected claw-free cubic graphs. Additionally, they prove that the inequalities  \eqref{z<3/4alpha} and \eqref{z<3/10n} hold in the case where $G$ contains no diamond bracelets or parallel diamond chains. We provide families of graphs as counterexamples for these inequalities outside of this subset of graphs to show that they do not hold in general. 

\section{Main Results}

\subsection{Question 1}
One of the questions raised in \cite{DH}, is whether the following equation holds only for a finite set of connected claw-free cubic graphs.
\begin{equation}\label{eq:z=alpha+1}
Z(G)=\alpha(G)+1.
\end{equation} This question has been answered affirmatively by He et al. in \cite{He} by showing that graphs $C_{3}\boxempty K_{2}$, $N_{2}$ and $N_{3}$  in Figure~\ref{fig:z=alpha+1} are the only connected claw-free cubic graphs with $Z(G)= \alpha(G)+1$.  In this paper, we provide an alternative proof.

\begin{figure}[!h]
\begin{center}
\begin{tikzpicture}
\begin{scope} 
\node  (1) at (0.293,0.5) {};
\node  (2) at (1,1) {};
\node  (3) at (0.293,1.5) {};
\node (4) at (2,1) {};
\node (5) at (2.707,0.5) {};
\node (6) at (2.707,1.5) {};

\draw (1) -- (2) -- (3) -- (1);
\draw (4) -- (5) -- (6) -- (4);

\draw (3) -- (6);
\draw (2)--(4);
\draw (1) -- (5);

\node[rectangle,draw=none] at (1.5,-0.5) { $C_{3} \Box K_{2}$};
\end{scope}

\begin{scope} [xshift=4cm,yshift=0.5cm]
\node (1) at (0,0.707) {};
\node (2) at (0.5,0) {};
\node (3) at (1,0.707) {};
\node (4) at (0.5,1.414) {};

\node (5) at (2,0.707) {};
\node (6) at (2.5,0) {};
\node (7) at (3,0.707) {};
\node (8) at (2.5,1.414) {};

\draw (1) -- (2) -- (3) -- (4) -- (1) -- (3);
\draw (5) -- (6) -- (7) -- (8) -- (5) -- (7);

\draw (4)--(8);
\draw (2)--(6);
\node[rectangle,draw=none] at (1.5,-1) { $\mathcal{N}_{2}$};

\end{scope}

\begin{scope} [xshift=8cm,yshift=0.5cm]

\node (1) at (0.14,0.15) {};
\node (2) at (1,0) {};
\node (3) at (0.86,0.85) {};
\node (4) at (0,1) {};

\node (5) at (2.14,0.85) {};
\node (6) at (2,0) {};
\node (7) at (2.86,0.15) {};
\node (8) at (3,1) {};

\node (9) at (0.793,2) {};
\node (10) at (1.5,1.5) {};
\node (11) at (1.5,2.5) {};
\node (12) at (2.207,2) {};

\draw (1) -- (2) -- (3) -- (4) -- (1) -- (3);
\draw (5) -- (6) -- (7) -- (8) -- (5) -- (7);
\draw (10) -- (12) -- (11) -- (9) -- (10) -- (11);

\draw (4)--(9);
\draw (12)--(8);
\draw (2)--(6);
\node[rectangle,draw=none] at (1.5,-1) { $\mathcal{N}_{3}$};

\end{scope}

\end{tikzpicture}
\caption{Finite set of graphs that satisfy $ Z(G)=\alpha(G)+1$ }\label{fig:z=alpha+1}
\end{center}\vspace{-15pt}
\end{figure} 

Let $G=(V,E)$ be a connected claw-free cubic graph of order $n$. We consider the following three cases: 

\begin{enumerate}
\item $G$ is triangle-free
\item $G$ is diamond-free
\item $G$ contains both triangle and diamond units 
\end{enumerate}

\subsubsection{Graph $G$ is triangle-free.}

 Both, the zero forcing number and the independence number  are given in Lemma~\ref{Ncubic}. 
 For $n=8, 12$ we have $Z(G)=\alpha(G)+1$ and for $n=16, 20$ we have $Z(G)=\alpha(G)$. For all values with $n\geq 24$ we have $Z(G)<\alpha(G)$. Figure~\ref{fig:z=alpha+1} shows graphs ${N}_{2}$ and ${N}_{3}$ that satisfy $Z(G)=\alpha(G)+1$.

\subsubsection{$G$ is diamond-free.}

Since there must be at least two triangles for a connected cubic diamond-free claw-free graph, $G=C_{3} \Box K_{2}$, which is illustrated in Figure$~$\ref{fig:z=alpha+1}, is the graph in this family with the minimum number of triangles. Equation \eqref{eq:z=alpha+1} holds for $G=C_{3} \Box K_{2}$ since
 $Z(G)=3$ and $\alpha(G)=2$.
 For any other graph with more than two triangles Theorem~\ref{diamond-freeZleqalpha} shows that $Z(G)\leq \alpha(G)$. 

\begin{proposition}
Every connected claw-free cubic graph has an even number of triangle units.
\end{proposition}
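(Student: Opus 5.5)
We prove the statement by a parity count, using the partition result of \cite{HL}: the vertex set of a connected claw-free cubic graph $G$ splits uniquely into triangle units (vertex sets of size $3$) and diamond units (vertex sets of size $4$). Let $T$ and $D$ be the numbers of triangle and diamond units. Then
\[
n = 3T + 4D .
\]
Since $G$ is cubic, the handshake lemma gives $3n = 2|E(G)|$, so $n$ is even. Hence $3T = n - 4D$ is even, and so $T$ is even.

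The only point to watch is the identification of vertex counts. Here one uses that the units in the partition of \cite{HL} have exactly $3$ or $4$ vertices and together cover $V(G)$ without overlap, so that $n = 3T+4D$ holds exactly. With that equation in hand the argument is complete.

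As a cross-check, we can also count edges leaving units. In a triangle unit, each vertex has degree $2$ inside the unit, so the unit sends $3$ edges outward. In a diamond unit, only the two vertices $a_k,b_k$ of the missing edge have degree $2$ inside, so the unit sends $2$ edges outward. Every edge not inside a unit joins two distinct units, so the total number of outgoing edge-ends, $3T+2D$, equals twice the number of inter-unit edges. This again forces $T$ to be even.

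The special case $G=K_4$ would be a single diamond with an extra edge, but $K_4$ is not partitioned this way. In any case $K_4$ contains no triangle unit, so the claim $T=0$ is even there too. If the partition theorem of \cite{HL} excludes $K_4$, this remark covers it.
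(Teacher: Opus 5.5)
Your proof is correct and is essentially the paper's own argument: the handshake lemma gives $3n = 2|E(G)|$, so $n$ is even, and since $n = 3T + 4D$ with each diamond unit contributing an even number of vertices, $T$ must be even. Your edge-end count $3T + 2D = 2\cdot(\text{number of inter-unit edges})$ is a valid alternative route to the same conclusion. Your $K_4$ remark usefully handles the exceptional case of the partition result of \cite{HL}, which the paper's wording passes over.
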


\proof We have $\sum_{v\in V(G)} \deg(v)= 2 e$ where $e$ is the number of edges. Thus, $3n=2e$ which implies $n$ is even. Since every diamond unit has an even number of vertices, this means that the number of triangle units must be even. \qed 

\begin{remark}\label{Cubictriangle12}
The graphs illustrated on Figure \ref{diamond-free-on-4-triangles}, are the only connected claw-free cubic graphs on 12 vertices that are diamond-free, up to isomorphism.
\end{remark}

\begin{figure}[h!]
\begin{center}
\begin{tikzpicture}
\begin{scope} [xshift=4cm,yshift=0.5cm]
\node   [fill={blue}]   (1) at (0.293,1) {};
\node   [fill={blue}]   (2) at (1,1.5) {};
\node   [fill={blue}]   (3) at (0.293,2) {};
 
\node                      (4) at (2,1.5) {};
\node                      (5) at (2.707,1) {};
\node  [fill={blue}]    (6) at (2.707,2) {};

\node                      (7) at (1.5,1){};
\node                      (8) at (1,0.293){};
\node                      (9) at (2,0.293){};

\node                      (10) at (1.5,2){};
\node                      (11) at (1,2.707){};
\node                      (12) at (2,2.707){};

\draw (1) -- (2) -- (3) -- (1);
\draw (4) -- (5) -- (6) -- (4);
\draw (7) -- (8) -- (9) -- (7);
\draw (10) -- (11) -- (12) -- (10);

\draw (2)--(4);
\draw (3)--(11);
\draw (6) -- (12);
\draw (1) -- (8);
\draw (5) -- (9);
\draw (10) -- (7);


\end{scope}

\begin{scope} [xshift=8cm]

\node  [fill={blue}]     (7) at   (0.086,2.75) {};
\node  [fill={blue}]     (8) at   (0.793,2.25) {};
\node  [fill={blue}]     (9) at   (0.793,3.25) {};
\draw (7) -- (8) -- (9) -- (7);

\node                       (1) at   (1.543,2.25) {};
\node                       (2) at   (2.25,2.75) {};
\node                       (3) at   (1.543,3.25) {};
\draw  (1) -- (2) -- (3) -- (1);

\node                       (4) at   (0.086,1) {};
\node   [fill={blue}]    (5) at   (0.793,0.5) {};
\node                       (6) at   (0.793,1.5) {};
\draw (4) -- (5) -- (6) -- (4);

\node                       (10) at   (1.543,0.5) {};
\node                       (11) at   (2.25,1) {};
\node                       (12) at   (1.543,1.5) {};
\draw  (10) -- (11) -- (12) -- (10);

\draw (8) -- (1);
\draw (9) -- (3);
\draw (5) -- (10);
\draw (6) -- (12);
\draw (7) -- (4);
\draw (2) -- (11);

\end{scope}

\end{tikzpicture}
\caption{The only two connected claw-free cubic and diamond-free graph on 12 vertices with respective zero forcing sets}\label{diamond-free-on-4-triangles}
\end{center}\vspace{-15pt}
\end{figure}

\begin{lemma}\label{triangleonly}
If $G$ is a connected claw-free cubic and diamond-free graph on $n$ vertices, then  $\alpha(G)=\displaystyle\frac{n}{3}$, i.e. the number of triangle units.
\end{lemma}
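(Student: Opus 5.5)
We prove the two inequalities $\alpha(G)\le n/3$ and $\alpha(G)\ge n/3$ separately, starting from the structure theorem of \cite{HL}. That theorem says $V(G)$ is uniquely partitioned into triangle units and diamond units. Since $G$ is diamond-free, every unit is a triangle. Hence $V(G)$ splits into $n/3$ vertex-disjoint triangles $T_1,\dots,T_{n/3}$, and $n/3$ is exactly the number of triangle units.

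\emph{Upper bound.} An independent set meets each triangle $T_k$ in at most one vertex, because the three vertices of $T_k$ are pairwise adjacent. So every independent set has at most $n/3$ vertices, and $\alpha(G)\le n/3$.

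\emph{Lower bound.} We need to pick one vertex from each triangle so that no two picked vertices are adjacent. First record the structure of the remaining edges. Each vertex of $T_k$ has degree 2 inside $T_k$, so it has exactly one neighbour outside $T_k$. The edges not inside any triangle therefore form a perfect matching $M$ of $G$. Contract each triangle to a single vertex to obtain a multigraph $H$ on $n/3$ vertices. Every vertex of $H$ has degree 3, and $H$ may contain parallel edges. $H$ has no loops, because a loop would come from an edge of $M$ with both ends in one triangle, but $T_k$ is already a full triangle.

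Choosing one vertex per triangle now means choosing, at each vertex of $H$, one of its three incident edge-ends. The chosen set is independent exactly when no edge of $M$ (equivalently, no edge of $H$) has both of its ends chosen. So it suffices to orient every edge of $H$ so that each vertex has at least one outgoing edge. Then at each triangle we choose the vertex incident to an outgoing edge. If an edge $uv$ of $M$ is oriented from $u$ to $v$, only the $u$-end can have been chosen through that edge. Hence no edge of $M$ has both endpoints chosen, and the selected set is independent.

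To build the orientation, look at each connected component $C$ of $H$. Since $C$ is cubic, $|E(C)|=\tfrac32|V(C)|\ge |V(C)|$, so $C$ contains a cycle; in a multigraph a cycle may be a pair of parallel edges. Orient that cycle cyclically. Then orient every other edge of $C$ away from the cycle along a BFS tree rooted at the cycle: each vertex not on the cycle points toward its parent. Orient any leftover edges arbitrarily. Every vertex then has out-degree at least 1, since cycle vertices have an outgoing cycle edge and all other vertices have an outgoing tree edge. By the previous paragraph this gives an independent set of size $n/3$, so $\alpha(G)\ge n/3$.

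The main step is this orientation argument. Care is needed in two places: parallel edges in $H$ must be handled correctly, and they are, since a 2-cycle of parallel edges oriented cyclically gives each endpoint an out-edge. Also, the chosen outgoing edge at each vertex must correspond to a single vertex of the triangle, which holds because each triangle vertex carries exactly one edge of $M$. Combining the two bounds gives $\alpha(G)=n/3$.
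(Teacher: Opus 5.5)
Your proof is correct. Your upper bound is the paper's pigeonhole argument; the lower bound $\alpha(G)\ge n/3$ takes a genuinely different route.

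The paper gets the lower bound in two lines from Brooks' theorem. Since $G$ is connected and neither complete nor an odd cycle, $\chi(G)\le\Delta(G)=3$. Because colour classes are independent, $n\le\chi(G)\,\alpha(G)\le 3\alpha(G)$. That argument never uses the triangle structure, so it holds for every connected cubic graph other than $K_4$. It is short, but it imports a substantial theorem and does not say which independent set is attained.

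Your argument is elementary and self-contained. You orient the contraction multigraph $H$ (the same object as the paper's $M_{G'}$ in its last section) so that every vertex has out-degree at least one. This explicitly constructs, by a simple linear-time procedure, an independent set meeting each triangle unit in exactly one vertex. By the pigeonhole bound every maximum independent set is such a transversal anyway, so both routes arrive at the same object; yours just builds it directly.

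Two small points.

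\begin{itemize}
\item \textbf{Wording of the orientation.} You say the remaining edges are oriented ``away from the cycle''. What you need, and what your next clause actually says, is that each non-cycle vertex points toward its parent, i.e.\ tree edges are oriented \emph{toward} the cycle. Fix the wording.
\item \textbf{The $K_4$ exception.} Strictly speaking the statement needs $G\neq K_4$. $K_4$ is cubic, claw-free and has no induced diamond, yet $\alpha(K_4)=1\neq 4/3$. Your proof excludes $K_4$ silently when it invokes the partition of \cite{HL}, which is only valid for $G\neq K_4$. The paper excludes it just as silently when it asserts that $G$ is not complete before applying Brooks' theorem. You should state the exclusion explicitly.
\end{itemize}
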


Using Brook's Theorem \cite{West}, since the graph $G$ is neither a complete graph nor a cycle on odd number of vertices, we have $\chi(G)\leq \Delta (G)$. On the other hand, for any graph $G$ on $n$ vertices, we have $\displaystyle \frac{n}{\alpha(G)}\leq \chi(G)$. Thus, 
\[\displaystyle \frac{n}{\alpha(G)}\leq \chi(G)\leq \Delta (G)\] This implies that $\displaystyle \frac{n}{\Delta(G)}\leq \alpha(G)$. Since $\Delta(G)=3$, we have   $\displaystyle \frac{n}{3}\leq \alpha(G)$. \\

Alternatively, if $G$ has an independent set $I$ that contains more than $\displaystyle \frac{n}{3}$ vertices, then there exists at least one triangle such that two of its vertices belong to $I$ which is a contradiction with the fact that $I$ is independent. This implies that $\alpha(G)\leq \frac{n}{3}$. Therefore, $\alpha(G)=\displaystyle\frac{n}{3}$. \qed \\ 

\begin{theorem}\label{diamond-freeZleqalpha}
For any connected claw-free cubic and diamond-free graph on at least 12 vertices, we have $Z(G)\leq \alpha(G)=\displaystyle \frac{n}{3}$.
\end{theorem}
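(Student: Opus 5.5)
The plan is to run the Davila--Henning forcing procedure with the triangle rule and to count tokens carefully. Since $G$ is diamond-free, by the partition result of \cite{HL} every vertex lies in exactly one triangle unit. By Lemma~\ref{triangleonly} the target bound is $n/3=T$, the number of triangles. I would pass to the contraction multigraph $H$, obtained by shrinking each triangle unit to a single vertex. Each vertex of a triangle unit has exactly one neighbor outside its triangle, so $H$ is a connected cubic multigraph on $T$ vertices. It has no loops, since a triangle unit is induced, but it may have parallel edges. Because $n\ge 12$ and $T$ is even by the Proposition, we have $T\ge 4$.

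The token count goes as follows. I color all three vertices of one starting triangle $T_1$, which costs $3$ tokens. Thereafter triangles are processed one at a time. Whenever a triangle $T_k$ has exactly one colored vertex $a_k$, forced from outside, I apply the triangle rule. Case A adds one token and Case B adds none. A triangle all of whose outer neighbors are already colored is forced entirely for free. Every other triangle receives at most one token, so
\[
|S|\;\le\; 3+(T-1)-(\text{number of triangles colored without a new token}).
\]
Hence it suffices to exhibit at least two triangles other than $T_1$ that are colored without a new token.

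To get these two savings I choose the processing order via $H$. First pick a vertex $v\neq T_1$ of $H$ such that $H-v$ is connected, for instance a leaf of a spanning tree of $H$ rooted at $T_1$. Process the triangles of $H-v$ in a search order from $T_1$, and process $v$ last. When $v$ is reached, all three outer neighbors of its triangle are colored. Each of these neighbors lies in an already fully colored triangle, so it has exactly one uncolored neighbor and forces into $v$'s triangle. This is the first saving.

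For the second saving I count edges. $H-v$ has $T-1$ vertices and $\frac{3T}{2}-3$ edges, so it contains $\frac{3T}{2}-3-(T-2)=\frac{T}{2}-1\ge 1$ edges outside a spanning tree. Some triangle $T_k\neq T_1$ in $H-v$ is therefore entered from two already colored triangles: when it is reached, the endpoint of a second edge into $T_k$ is already colored. I then want the ordering to realize Case B at $T_k$. That is, its second outer neighbor $c_{k'}$, sitting in a fully colored triangle, forces $c_k$, and $c_k$ then forces $b_k$; this costs no token. Together with the last triangle this gives $|S|\le T+2-2=T=\alpha(G)$.

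The main obstacle is making this Case B event legitimate in the actual chronology. I must check that the second tree neighbor really is fully colored before $T_k$ is handled. I must also check that no intermediate state blocks forces: every forcing vertex must have exactly one uncolored neighbor at the moment it forces, including when parallel edges join the same pair of triangles. I would handle this by processing $H-v$ in BFS order and letting $T_k$ be the first vertex in that order incident to a non-tree edge back to an earlier vertex, and verifying the forcing steps directly for that configuration. As a sanity check, the base case $n=12$ is covered by the two graphs of Remark~\ref{Cubictriangle12}. Figure~\ref{diamond-free-on-4-triangles} exhibits zero forcing sets of size $4=\alpha(G)$ for both, so the argument should agree with them.
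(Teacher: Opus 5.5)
Your approach is sound and genuinely different from the paper's, and the step you flag as the main obstacle closes easily.

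The paper inducts on $T$. It deletes two adjacent triangle units, reconnects their outside neighbors to get a smaller graph $G'$, and applies the hypothesis to $G'$. It then reinserts the two triangles with at most two new tokens via Cases A and B of the triangle rule, matched by $\alpha(G)=\alpha(G')+2$ from Lemma~\ref{triangleonly}. You instead run one forcing process on $G$ and account for tokens in the contraction multigraph $H$.

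For that you need neither $v$, nor a spanning tree, nor BFS. You also could not guarantee that $v$ comes last, since forcing is automatic. Index the triangles as $X_1=T_1,X_2,\dots,X_T$ in the order in which they actually become fully colored. Let $e_j$ be the number of edges of $H$ from $X_j$ to earlier triangles. Every token lies in $T_1$ or in a triangle that becomes full at once, so the colored vertices of a not-yet-full triangle are exactly those whose outer neighbor lies in a full triangle. Hence a Case A token is used only when $e_j=1$. If $e_j\ge 2$, two of its vertices were forced from full triangles and one of them then forces the third; parallel edges cause no trouble.

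Every edge is counted once, at its later end, so $\sum_{j\ge2}e_j=\frac{3T}{2}$. With $t$ Case A triangles and $f=T-1-t$ free ones, this gives $\frac{3T}{2}\le t+3f=T-1+2f$. So $f\le 1$ would force $T\le 2$, and you get your two savings; one of them is always $X_T$, with $e_T=3$.

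Compared with the induction, your route needs no base case beyond $T\ge 4$. It also avoids checking at each step that the reduced graph stays connected and simple. That check is delicate: for example, the paper's $G'$ in Case~II of its proof is disconnected when the edge joining $T_1$ and $T_2$ is a bridge of $G$. It also yields more, since the same inequality gives $t\le\frac{3T}{4}-\frac32$, i.e.\ $Z(G)\le\frac{n}{4}+\frac32$. The paper's route only ever manipulates two triangles at a time. It pays for that with these structural checks and with transferring the forcing chronology from $G'$ back to $G$.
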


\proof We use induction on the number of triangles. Using Remark \ref{Cubictriangle12}, up to isomorphism, the only two connected claw-free cubic graphs on 12 vertices are the graphs in Figure \ref{diamond-free-on-4-triangles}; in both cases $Z(G)=\alpha(G)=4$. Now suppose the inequality holds for all connected claw-free and diamond-free cubic graphs on at most $3k$ vertices where $k\geq 4$ is the the number of triangle units and is even. Let $G$ be such a graph on $3k+6$ vertices and consider $T_1, T_2$ with at least one edge connecting them, which are triangle units in $G$ with vertices $a_1,b_1,c_1$ and $a_2,b_2,c_2$, respectively.  There are two cases to consider; either $T_1$ and $T_2$ have two edges between them or they have one edge between them.  Note that we do not consider the case in which they have three edges between them, as the only case where a connected claw-free cubic graph will have this is if the graph is $C_3\square K_2$.  

\textbf{Case I:} Suppose that $T_1$ and $T_2$ are connected by two edges, namely $\{a_1,a_2\}$ and $\{b_1,b_2\}$. Let $\{c_1,d_1\}$ and $\{c_2,d_2\}$ denote the two edges of $G$ that connect $T_1$ and $T_2$, respectively, to the remainder of the graph. Note that $d_1\neq d_2$ otherwise $G$ contains a claw subgraph. Now let $G'$ be the graph with $T_1$ and $T_2$ removed and the edge $\{d_1, d_2\}$ added.  This is a connected claw-free cubic and diamond-free graph on $3k$ vertices as each vertex lies on a triangle unit, so by our inductive hypothesis we have $Z(G')\leq \alpha(G')$.

Now we assume that in $G'$ there is a forcing chain such that $d_1$ forces $d_2$.  We subdivide $\{d_1, d_2\}$ four times so that we create edges $\{d_1, c_1\}, \{c_1, b_1\}, \{b_1, b_2\}, \{b_2, c_2\}$ and $\{c_2, d_2\}$.  Then we add vertices $a_1$ and $a_2$ along with the edges $\{a_1, b_1\}, \{a_1, c_1\},\{a_2, b_2\}, \{a_2, c_2\}$ and $\{a_1, a_2\}$ to reconstruct our original graph $G$.  

Following the same forcing process as in $G'$ we have that $d_1$ forces $c_1$.  Then through \textit{Case A} of the triangle rule we force $T_1$ and through \textit{Case B} of the triangle rule we force $T_2$, which then in turn forces $d_2$.  This implies that $Z(G)\leq Z(G')+1\leq \alpha(G')+1$.  Additionally, by Lemma \ref{triangleonly} we have that $\alpha(G)=\alpha(G')+2$, so $Z(G)\leq \alpha(G)-1\leq \alpha(G)$.  

If this forcing chain does not exist in $G'$, then the all triangle units in $G$ other than $T_1$ and $T_2$ can be forced in the same manner as they are in $G'$ as the only difference between the two graphs is this edge.  Again by our inductive hypothesis we have that $Z(G')\leq \alpha(G')$.  From here we can color $T_1$ with \textit{Case A} of the triangle rule and $T_2$ with \textit{Case B} of the triangle rule.  Then we can obtain the same inequality $Z(G)\leq\alpha(G)-1\leq \alpha(G)$.

\textbf{Case II: } Now suppose there is only one edge connecting $T_1$ and $T_2$.  We now have two subcases.  The first is that there exists a triangle $T_3$ such that either $T_1$ or $T_2$ shares two edges with, say $T_2$.  Then we can instead take $T_2$ and $T_3$ and use the proof for the previous case.

Otherwise, $T_1$ and $T_2$ connect through the edge $\{a_1, a_2\}$ and the other edges incident to these triangles are  $\{b_1,e_1\}$,$\{c_1,d_1\}$,$\{b_2,e_2\},$ and $\{c_2,d_2\}$.  We note that in this subcase $\{d_1, e_1\}, \{d_2, e_2\}\notin E(G)$, otherwise $G$ contains an induced claw subgraph.  Then let $G'$ be the graph obtained by deleting $T_1$ and $T_2$ and adding edges $\{d_1, e_1\}$ and $\{d_2, e_2\}$.  The graph $G'$ is a connected claw-free cubic and diamond-free graph on $3k$ vertices as each vertex lies in a triangle unit, so by the inductive hypothesis $Z(G')\leq \alpha(G')$.

 Now suppose that in the forcing process of $G'$ there is a forcing chain such that $e_1$ forces $d_1$ and $e_2$ forces $d_2$.  Now we subdivide the edge $\{d_1, e_1\}$ twice to obtain the edges $\{e_1, b_1\}, \{b_1, c_1\}, \{c_1, d_1\}$ and we do the same to the edge $\{d_2, e_2\}$ to obtain the edges $\{e_2, b_2\}, \{b_2, c_2\}, \{c_2, d_2\}$.  Now add the vertices $a_1$ and $a_2$ along with the edges $\{a_1, b_1\},$ $\{a_1, c_1\},$ $\{a_2, b_2\}, \{a_2, c_2\}$ and $\{a_1, a_2\}$ to obtain our original graph $G$.  Then $e_1$ forces $b_1$ and $e_2$ forces $b_2$ following the same forcing process as $G'$.  Then we can use \textit{Case A} of triangle rule followed by the \textit{Case B} of the triangle rule to fill triangles $T_1$ and $T_2$.  Then the vertices $d_1$ and $d_2$ will be forced.  This implies that $Z(G)\leq Z(G')+1\leq \alpha(G')+1$.  Additionally, by Lemma \ref{triangleonly} we have that $\alpha(G)=\alpha(G')+2$, so $Z(G)\leq \alpha(G)-1\leq \alpha(G)$. 

In the case where only one of these forcing chains exists in the forcing process of $G'$, say $e_1$ forces $d_1$, then after performing the same subdivisions and vertex additions to obtain $G$ we have that $e_1$ will force $b_1$ after performing the same forcing process as $G'$.  Then we can use \textit{Case A} of the triangle rule to fill $T_1$ in its entirety.  This will in turn force $d_1$ in addition to $a_2$.  Then \textit{Case A} of the triangle rule can be applied again for $T_2$, filling it completely.  The remainder of the forcing process proceeds as it did for $G'$.  This implies that $Z(G)\leq Z(G')+2\leq \alpha(G')+2$.  Additionally, by Lemma \ref{triangleonly} $\alpha(G)=\alpha(G')+2$, so $Z(G)\leq \alpha(G)$

If neither of these forcing chains exist, then the forcing process on $G$ proceeds the same as $G'$ until $T_1$ and $T_2$ are reached.  Then we can apply \textit{Case A} of the triangle rule to $T_1$ and \textit{Case B} of the triangle rule to $T_2$ to fill the remainder of the graph.  This implies that $Z(G)\leq Z(G')+1\leq \alpha(G')+1$.  Using Lemma \ref{triangleonly} we have $\alpha(G)=\alpha(G')+2$, so $Z(G)\leq \alpha(G)-1\leq \alpha(G)$.
 \qed

\subsubsection{Graph $G$ has both triangle and diamond units.} 
We consider two cases:  Case I: every diamond in $G$ belongs to a diamond chain; Case II: there is at least one diamond in $G$ that belongs to a diamond bracelet.\\
\\\textbf{Case I}:  
We obtain the graph $G'$ by deleting all diamond units in every chain and replacing them with an edge in each chain.
As a result  $G'$ is a connected claw-free cubic graph where every vertex belongs to an induced triangle unit. This implies that if $\mid V(G) \mid \geq 12$ then by Theorem \ref{diamond-freeZleqalpha} the inequality $Z(G') \leq \alpha(G') $ holds. 
We then replace every previously deleted diamond unit back into $G$ and update the zero forcing and the independent sets by applying the diamond rule which preserves our inequality $\mid S \mid \leq \mid I \mid$ and thus, $Z(G) \leq \alpha(G)$.

\textbf{Case II}:  
Let $T_1$ with $V(T_{1})=\{a,b,c\}$ be the triangle unit in a diamond bracelet $B_k$ and $D_i$ with $V(D_{i})=\{a_{i},b_{i},c_{i},d_{i}\}$ where the missing edge is $\{a_{i},b_{i}\} $ be the $i$'th  diamond where $D_1$ is connected to $T_1$ by the edge $\{b_{1},b\}$ and $D_{k}$ is connected to $T_1$ by the edge $\{a_{k},a\}$.  
If $G$ has more than one diamond bracelet we then initialize the forcing game in any arbitrary chosen diamond bracelet $B_{k}$ of $G$ as illustrated in Figure \ref {fig:Bk}. Initially $S =\{a_{1},d_{1},b_{1}\}$ and $I=\{a_{1},b_{1},c\}$.
We update $S$ and $I$ by considering a unit per time step.
If such unit is a triangle then we apply the triangle rule. If such unit is a diamond then we apply the diamond rule.
Note vertex $d_{1}$ can force vertex $c_1$ and $b_1$ forces $b$. 
Also, assume to be at time step where diamond $D_k$ was already colored then vertex $a_k$ forces $a$ and $a$ forces $c$. 
Thus, all vertices in triangle $T_1$ and diamond $D_1$ become colored without updating the existing forcing set.
Let $D$ be the number of diamond units beside $D_{1}$ that belong to $G$ and $T$ be the number of triangle units beside $T_1$ that belong to $G$. We note that once we have colored this initial bracelet that every remaining unit can be colored using either the triangle or diamond rule.  Thus, $\mid S \mid= 3+D+T$ and $\mid I \mid= 3+D+T$ which implies our inequality $Z(G)\leq \alpha(G)$ holds.  

\begin{figure}[h!]
\begin{center}
\begin{tikzpicture}

\begin{scope}

\node [fill={blue}, label=left:{$d_{1}$}] (1) at (0,1) {};
\node  [fill={blue},label={[label distance=0cm]110:$a_{1}$}] (2) at (0.375,1.5) {};
\node [label=right:{$c_{1}$}] (3) at (0.75,1) {};
\node [fill={blue},label=below:{$b_{1}$}] (4) at (0.375,0.5) {};
\draw  (1) -- (2) -- (3) -- (4) -- (1) -- (3);

\node  [label=left:{$a$}](5) at (3.25,1.5) {};
\node  [label={$c$}] (6) at (3.957,1) {};
\node  [label=below:{$b$}](7) at (3.25,0.5) {};
\draw (5) -- (6) -- (7) -- (5) ;

\node [label=left:{$d_{2}$}] (10) at (0,3) {};
\node  [label={$a_{2}$}] (20) at (0.375,3.5) {};
\node [label=right:{$c_{2}$}] (30) at (0.75,3) {};
\node [label={[label distance=0cm]-20:$b_{2}$}] (40) at (0.375,2.5) {};
\draw  (10) -- (20) -- (30) -- (40) -- (10) -- (30);

\node [label=left:{$c_{k}$}] (01) at (2.875,3) {};
\node  [label={$b_{k}$}] (02) at (3.25,3.5) {};
\node [label=right:{$d_{k}$}] (03) at (3.625,3) {};
\node [label={[label distance=0cm]200:$a_{k}$}] (04) at (3.25,2.5) {};
\draw  (01) -- (02) -- (03) -- (04) -- (01) -- (03);

\node (v) at (5.5,1) {} ;

\draw (2) -- (40);
\draw (4) -- (7);

\draw [dashed] (20) -- (02); 
\draw (2) -- (40);
\draw (04) -- (5);
\draw (6) -- (v);
\end{scope}
\end{tikzpicture}
\caption{Initialization is made in $B_{k}$, where $k$ is the number of dimond units.}
\label{fig:Bk}

\end{center}\vspace{-15pt}
\end{figure}


\subsection{Questions 2 and 3}
Dalvia and Henning additionally, pose the question of whether or not the following inequalities hold for connected claw-free cubic graph of sufficiently large order $n$ \cite{DH}.
\begin{enumerate}
\setcounter{enumi}{1}
\item $Z(G)\leq\frac{3}{4}\alpha(G)$
\item $Z(G)\leq\frac{3}{10}n$
\end{enumerate}
He et al. (2024) answered these questions affirmatively when $G$ is a connected claw-free cubic graph with at least one triangle and without diamond bracelets or parallel diamond chains \cite{He}.  However this does not prove these inequalities in general.  We answer both inequalities negatively by providing families of connected claw-free cubic graphs for each case as a counter-example.

\subsubsection{$Z(G)\protect\leq\frac{3}{4}\alpha(G)$}

To disprove inequality \eqref{z<3/4alpha}, we construct an infinite family of connected claw-free cubic graphs whose zero forcing numbers exceed $\frac{3}{4}\alpha(G)$. We begin with the graph $F_1$, shown in Figure~\ref{fig:F1}, together with a zero forcing set of size $5$.

To show that this zero forcing set is minimum, we construct a matrix $A\in\mathcal{S}(F_1)$ with $\operatorname{rank}(A)=9$. The matrix $A$ is shown in Figure~\ref{fig:matrix}, and a basis for its nullspace is given in Figure~\ref{nullvec}. Since
\[
|V(F_1)|-\operatorname{rank}(A)=14-9=5,
\]
Theorem~\ref{mlessthanZ} implies that $5\le Z(F_1)$. Because we have already exhibited a zero forcing set of size $5$, it follows that
\[
Z(F_1)=5.
\]

\begin{lemma} \label{lem:F1}
    For the graph $F_1$ we have $Z(F_1)=5$.
\end{lemma}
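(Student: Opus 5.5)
The proof splits into two inequalities: an upper bound from an explicit zero forcing set, and a lower bound from maximum nullity via Theorem~\ref{mlessthanZ}.

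\textbf{Upper bound.} I would take the five shaded vertices of $F_1$ in Figure~\ref{fig:F1} as the initial set $S$. I would then run the forcing process step by step, recording each force $u\rightarrow v$ only after checking that $u$ is colored and $v$ is its unique uncolored neighbour. Since $F_1$ is built from triangle and diamond units, the natural order is unit by unit. Inside a diamond, the rule is the one used in the diamond rule: once $a_k$ and $c_k$ are colored, $a_k$ forces $d_k$ and $d_k$ forces $b_k$. A triangle is completed as in Case~B of the triangle rule, once one of its vertices is forced from outside. After listing all forces, I would verify that all $14$ vertices end up colored. This gives $Z(F_1)\le 5$.

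\textbf{Lower bound.} It suffices to exhibit one $A\in\mathcal{S}(F_1)$ with $\operatorname{null}(A)=5$. Then $M(F_1)\ge 5$, and Theorem~\ref{mlessthanZ} gives $Z(F_1)\ge 5$. I would take the matrix in Figure~\ref{fig:matrix} and check three things:
\begin{enumerate}
\item $A$ is symmetric, and its off-diagonal nonzero pattern is exactly the edge set of $F_1$ (diagonal entries are free).
\item Each of the five vectors in Figure~\ref{nullvec} satisfies $Ax=0$. This is a row-by-row check, and each row involves only the three neighbours of that vertex plus the diagonal entry.
\item The five vectors are linearly independent. I would show this by finding five coordinates where they form an identity or triangular submatrix.
\end{enumerate}
These give $\operatorname{null}(A)\ge 5$. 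Combined with the zero forcing set of size $5$ and $\operatorname{null}(A)\le M(F_1)\le Z(F_1)$, nullity is also at most $5$, so $\operatorname{rank}(A)=9$ is automatic.

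\textbf{Main obstacle.} The difficulty lies in producing $A$ in the first place, if it were not already supplied. A generic matrix in $\mathcal{S}(F_1)$ has nullity well below $5$. To construct one, I would fix one nonzero null vector per intended forcing chain, supported on the corresponding units. Diamonds and triangles can be given local matrix blocks, for example an adjacency matrix with diagonal shifts chosen so that each block has a kernel. I would then choose the connecting-edge weights so that these local kernel vectors satisfy the equations at the endpoints of the linking edges. Balancing those linking equations is the delicate step. Once a candidate is found, the verification above is routine.
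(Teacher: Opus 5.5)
Your proposal is correct and matches the paper's argument: the five shaded vertices of Figure~\ref{fig:F1} give $Z(F_1)\le 5$, and the matrix $A\in\mathcal{S}(F_1)$ of Figure~\ref{fig:matrix} has nullity $5$, so Theorem~\ref{mlessthanZ} gives $5\le M(F_1)\le Z(F_1)$. Your version is a little more explicit than the paper's, which simply asserts $\operatorname{rank}(A)=9$, and the checks you list do go through; for instance, coordinates $1,2,4,6,9$ of the five vectors in Figure~\ref{nullvec} form an identity block.
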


\begin{figure}[h!] 
\begin{center}
\resizebox{0.5\textwidth}{!}{
\begin{tikzpicture}[label distance =0]
\begin{scope}

\node  [label=left:{$v^{6}$},fill={blue}]                              (1) at (-0.75,3.414) {};
\node  [label=above:{$v^{7}$},fill={blue}]                          (2) at (-0.375,3.914) {};
\node  [label={[label distance=0cm]200:$v^{5}$},fill={blue}] (3) at (-0.375,2.914) {};
\node  [label=right:{$v^{13}$}]                                           (4) at (0,3.414) {};
\draw  (1) -- (2) -- (4) -- (3) -- (1) -- (4);

\node  [label={[label distance=0cm]200:$v^{4}$}]                (5) at (0.25,2.414) {};
\node  [label=below:{$v^{12}$}]                                         (6) at (1.25,2.414) {};
\node  [label={[label distance=0cm]200:$v^{3}$}]                (7) at (0.75,1.707) {};
\draw  (5) -- (6) -- (7) -- (5) ;

\node  [label=below:{$v^{2}$}]                                         (20) at (0.75,0.957) {};
\node  [label=below:{$v^{1}$}]                                         (21) at (0.25,0.25) {};
\node  [label=below:{$v^{11}$},fill={blue}]                           (22) at (1.25,0.25) {};
\draw  (20) -- (21) -- (22) -- (20) ;

\node  [label=left:{$v^{14}$},fill={blue}]                               (11) at (1.5,3.414) {};
\node  [label=above:{$v^{8}$}]                                          (12) at (1.875,3.914) {};
\node  [label=below:{$v^{10}$}]                                          (13) at (1.875,2.914) {};
\node  [label=right:{$v^{9}$}]                                            (14) at (2.25,3.414) {};
\draw  (11) -- (12) -- (14) -- (13) -- (11) -- (14);

\draw  (2) -- (12);
\draw  (3) -- (5);
\draw  (13)--(6);
\draw  (7) -- (20);

\end{scope}

\end{tikzpicture}
}
\caption{Graph $F_1$ with a zero forcing set of size $5$}
\label{fig:F1}
\end{center}
\vspace{-15pt}
\end{figure}
\newpage
\setcounter{MaxMatrixCols}{15}
\begin{figure}
    \centering
    $$A=\begin{bmatrix}
 2 & -1 & 0 & 0 & 0 & 0 & 0 & 0 & 0 & 0 & 2 & 0 & 0 & 0\\
 -1 & 2 & -1 & 0 & 0 & 0 & 0 & 0 & 0 & 0 & -1 & 0 & 0 & 0\\
 0 & -1 & 2 & -1 & 0 & 0 & 0 & 0 & 0 & 0 & 0 & -1 & 0 & 0\\
 0 & 0 & -1 & 2 & -1 & 0 & 0 & 0 & 0 & 0 & 0 & 1 & 0 & 0\\
 0 & 0 & 0 & -1 & 2 & -1 & 0 & 0 & 0 & 0 & 0 & 0 & -1 & 0\\
 0 & 0 & 0 & 0 & -1 & 2 & -1 & 0 & 0 & 0 & 0 & 0 & 2 & 0\\
 0 & 0 & 0 & 0 & 0 & -1 & 2 & -1 & 0 & 0 & 0 & 0 & -1 & 0\\
 0 & 0 & 0 & 0 & 0 & 0 & -1 & 2 & -1 & 0 & 0 & 0 & 0 & -1\\
 0 & 0 & 0 & 0 & 0 & 0 & 0 & -1 & 2 & -1 & 0 & 0 & 0 & 2\\
 0 & 0 & 0 & 0 & 0 & 0 & 0 & 0 & -1 & 0.9 & 0 & -0.1 & 0 & -1\\
 2 & -1 & 0 & 0 & 0 & 0 & 0 & 0 & 0 & 0 & 2 & 0 & 0 & 0\\
 0 & 0 & -1 & 1 & 0 & 0 & 0 & 0 & 0 & -0.1 & 0 & 0.9 & 0 & 0\\
 0 & 0 & 0 & 0 & -1 & 2 & -1 & 0 & 0 & 0 & 0 & 0 & 2 & 0\\
 0 & 0 & 0 & 0 & 0 & 0 & 0 & -1 & 2 & -1 & 0 & 0 & 0 & 2
\end{bmatrix}$$
    \caption{$A\in S(F_1)$ such that $\text{rank}(A)=9$}
    \label{fig:matrix}
\end{figure}

\begin{figure}
    \centering
    $\text{Null}(A)=\text{span}\left\{\begin{bmatrix}
1\\0\\0\\0\\0\\0\\0\\0\\0\\0\\-1\\0\\0\\0
    \end{bmatrix}, 
    \begin{bmatrix}
        0\\1\\\frac{3}{2}\\0\\\frac{1}{2}\\0\\\frac{3}{2}\\2\\0\\3\\\frac{1}{2}\\2\\1\\\frac{5}{2}
    \end{bmatrix},
    \begin{bmatrix}
        0 \\0\\0\\1\\1\\0\\1\\1\\0\\1\\0\\-1\\1\\1
    \end{bmatrix},
 \begin{bmatrix}
        0\\0\\0\\0\\0\\1\\0\\0\\0\\0\\0\\0\\-1\\0
    \end{bmatrix},
\begin{bmatrix}
        0\\0\\0\\0\\0\\0\\0\\0\\1\\0\\0\\0\\0\\-1
    \end{bmatrix}
    \right\}$
    \caption{Five linearly independent null-vectors of $A$}
    \label{nullvec}
\end{figure}

We now define $G_{k}$ as the graph obtained by $k\geq2$ vertex disjoint copies of $F_1$, where the vertices of degree two of the triangles are connected with each other to form a cycle $C_{2k}$.
Thus  $\mid V(G_{k})\mid=14k$.
Figure~\ref{fig:Gk} illustrates the graph $G_k$, where the colored vertices form a zero forcing set of $G_{k}$ of size $4k$, implying $Z(G_k)\leq 4k$. Additionally, $I=\{v_i^2, v_i^4, v_i^6, v_i^8, v_i^{10}:1\leq i\leq k\}$ is an independent set of size $5k$ of $G_k$.  We remark that this is a maximum independent set on $G_k$  because each triangle can have at most one vertex in an independent set.  Additionally each 14-vertex block is isomorphic to $F_1$, and $\alpha(F_1)=5$. Indeed, an independent set of size $6$ would require selecting two vertices from each diamond, but this is impossible because vertices $v_i^{7}$ and $v_i^{8}$ are adjacent. Therefore,
\[
\alpha(G_k)\le 5k.
\]

\begin{figure}[h!] 
\begin{center}
\resizebox{1\textwidth}{!}{
\begin{tikzpicture}[label distance =0]
\begin{scope}

\node  [label=left:{$v_{1}^{6}$},fill={blue}]                              (1) at (-0.75,3.414) {};
\node  [label=above:{$v_{1}^{7}$},fill={blue}]                          (2) at (-0.375,3.914) {};
\node  [label={[label distance=0cm]200:$v_{1}^{5}$},fill={blue}] (3) at (-0.375,2.914) {};
\node  [label=right:{$v_{1}^{13}$}]                                           (4) at (0,3.414) {};
\draw  (1) -- (2) -- (4) -- (3) -- (1) -- (4);

\node  [label={[label distance=0cm]200:$v_{1}^{4}$}]                (5) at (0.25,2.414) {};
\node  [label=below:{$v_{1}^{12}$}]                                         (6) at (1.25,2.414) {};
\node  [label={[label distance=0cm]200:$v_{1}^{3}$}]                (7) at (0.75,1.707) {};
\draw  (5) -- (6) -- (7) -- (5) ;

\node  [label=below:{$v_{1}^{2}$}]                                         (20) at (0.75,0.957) {};
\node  [label=below:{$v_{1}^{1}$}]                                         (21) at (0.25,0.25) {};
\node  [label=below:{$v_{1}^{11}$},fill={blue}]                           (22) at (1.25,0.25) {};
\draw  (20) -- (21) -- (22) -- (20) ;

\node  [label=left:{$v_{1}^{14}$},fill={blue}]                               (11) at (1.5,3.414) {};
\node  [label=above:{$v_{1}^{8}$}]                                          (12) at (1.875,3.914) {};
\node  [label=below:{$v_{1}^{10}$}]                                          (13) at (1.875,2.914) {};
\node  [label=right:{$v_{1}^{9}$}]                                            (14) at (2.25,3.414) {};
\draw  (11) -- (12) -- (14) -- (13) -- (11) -- (14);

\draw  (2) -- (12);
\draw  (3) -- (5);
\draw  (13)--(6);
\draw  (7) -- (20);


\node [label=left:{$v_{2}^{6}$},fill={blue}] (1) at (3.75,3.414) {};
\node  [label=above:{$v_{2}^{7}$}](2) at (4.125,3.914) {};
\node  [label={[label distance=0cm]200:$v_{2}^{5}$},](3) at (4.125,2.914) {};
\node  [label=right:{$v_{2}^{13}$}] (4) at (4.5,3.414) {};
\draw (1) -- (2) -- (4) -- (3) -- (1) -- (4);

\node  [label={[label distance=0cm]200:$v_{2}^{4}$}] (5) at (4.75,2.414) {};
\node [ label=below:{$v_{2}^{12}$},fill={blue}] (6) at (5.75,2.414) {};
\node  [label={[label distance=0cm]200:$v_{2}^{3}$}](7) at (5.25,1.707) {};
\draw (5) -- (6) -- (7) -- (5) ;

\node  [label=below:{$v_{2}^{2}$}] (23) at (5.25,0.957) {};
\node  [label=below:{$v_{2}^{1}$}] (24) at (4.75,0.25) {};
\node  [label=below:{$v_{2}^{11}$}, fill = {blue}](25) at (5.75,0.25) {};
\draw (23) -- (24) -- (25) -- (23) ;

\node [label=left:{$v_{2}^{14}$},fill={blue}] (11) at (6,3.414) {};
\node [label=above:{$v_{2}^{8}$}] (12) at (6.375,3.914) {};
\node   [label=below:{$v_{2}^{10}$}] (13) at (6.375,2.914) {};
\node   [label=right:{$v_{2}^{9}$}] (14) at (6.75,3.414) {};
\draw (11) -- (12) -- (14) -- (13) -- (11) -- (14);

\draw (2) -- (12);
\draw (3) -- (5);
\draw (13)--(6);
\draw (7) -- (23);


\node  [label=left:{$v_{3}^{6}$}, fill={blue}] (1) at (8.50,3.414) {};
\node  [label=above:{$v_{3}^{7}$}] (2) at (8.875,3.914) {};
\node  [label={[label distance=0cm]200:$v_{3}^{5}$},] (3) at (8.875,2.914) {};
\node  [label=right:{$v_{3}^{13}$}] (4) at (9.25,3.414) {};
\draw (1) -- (2) -- (4) -- (3) -- (1) -- (4);

\node  [label={[label distance=0cm]200:$v_{3}^{4}$},] (5) at (9.5,2.414) {};
\node   [ label=below:{$v_{3}^{12}$},fill={blue}] (6) at (10.5,2.414) {};
\node  [label={[label distance=0cm]200:$v_{3}^{3}$}] (7) at (10,1.707) {};
\draw (5) -- (6) -- (7) -- (5) ;

\node   [label=below:{$v_{3}^{2}$}]  (26) at (10,0.957) {};
\node  [label=below:{$v_{3}^{1}$}] (27) at (9.5,0.25) {};
\node  [label=below:{$v_{3}^{11}$},fill={blue}](28) at (10.5,0.25) {};
\draw (26) -- (27) -- (28) -- (26) ;

\node [label=left:{$v_{3}^{14}$},fill={blue}] (11) at (10.75,3.414) {};
\node [label=above:{$v_{3}^{8}$}]   (12) at (11.125,3.914) {};
\node [label=below:{$v_{3}^{10}$}] (13) at (11.125,2.914) {};
\node [label=right:{$v_{3}^9$}] (14) at (11.5,3.414) {};
\draw (11) -- (12) -- (14) -- (13) -- (11) -- (14);

\draw (2) -- (12);
\draw (3) -- (5);
\draw (13)--(6);
\draw (7) -- (26);


\node [label=left:{$v_{k}^{6}$}, fill={blue}]  (1) at (13.25,3.414) {};
\node  [label=above:{$v_{k}^{7}$}]  (2) at (13.625,3.914) {};
\node  [label={[label distance=0cm]200:$v_{k}^{5}$}](3) at (13.625,2.914) {};
\node  [label=right:{$v_{k}^{13}$}] (4) at (14,3.414) {};
\draw (1) -- (2) -- (4) -- (3) -- (1) -- (4);

\node  [label={[label distance=0cm]200:$v_{k}^{4}$}]  (5) at (14.25,2.414) {};
\node  [label=below:{$v_{k}^{12}$},fill={blue}](6) at (15.25,2.414) {};
\node  [label={[label distance=0cm]200:$v_{k}^{3}$}]  (7) at (14.75,1.707) {};
\draw (5) -- (6) -- (7) -- (5) ;

\node   [label=below:{$v_{k}^{2}$}]  (29) at (14.75,0.957) {};
\node  [label=below:{$v_{k}^{1}$}]  (30) at (14.25,0.25) {};
\node [label=below:{$v_{k}^{11}$}] (31) at (15.25,0.25) {};
\draw (29) -- (30) -- (31) -- (29) ;

\node [label=left:{$v_{k}^{14}$},fill={blue}]  (11) at (15.5,3.414) {};
\node [label=above:{$v_{k}^{8}$}]  (12) at (15.875,3.914) {};
\node [label=below:{$v_{k}^{10}$}] (13) at (15.875,2.914) {};
\node [label=right:{$v_{k}^{9}$}] (14) at (16.25,3.414) {};
\draw (11) -- (12) -- (14) -- (13) -- (11) -- (14);

\draw (2) -- (12);
\draw (3) -- (5);
\draw (13)--(6);
\draw (7) -- (29);

\draw (22)--(24);
\draw (25) -- (27);
\draw[dashed] (28) -- (30);

\draw  (21)-- (0,0.25); 
\draw  (0,0.25) -- (0,-0.5);
\draw  (0,-0.5)-- (15.5,-0.5);

\draw (31) -- (15.5,0.25);
\draw (15.5,0.25) --(15.5,-0.5);

\end{scope}

\end{tikzpicture}
}
\caption{A zero forcing set of $G_k$ with size $4k$}
\label{fig:Gk}
\end{center}
\vspace{-15pt}
\end{figure} 

Since Figure~\ref{fig:Gk} shows a zero forcing set of size $4k$, it remains only to show that $4k\le Z(G_k)$. To establish this lower bound, we use a known result on the behavior of the zero forcing number under edge deletion. Specifically, we apply the following theorem.

\begin{lemma} \label{deletion} \cite{Edge}
    Let $e$ be any edge of the graph $G$, then $-1\leq Z(G)-Z(G-e)\leq 1$.
\end{lemma}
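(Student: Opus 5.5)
The idea is to take a chronological list of forces for a minimum zero forcing set of one graph and replay it in the other graph, adding one extra vertex to the initial set to repair the only force that can fail. Two facts carry the argument. First, a force $x\to y$ depends only on the neighbourhood of $x$, and deleting or adding $e=\{u,v\}$ changes only the neighbourhoods of $u$ and $v$. Second, colouring is monotone: if a set $S'\supseteq S$ is coloured initially, every force valid in the original order is still valid or unnecessary. A force is unnecessary exactly when its target is already coloured, and then we simply skip it.

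\textbf{Upper inequality $Z(G-e)\le Z(G)+1$.} Let $S$ be a minimum zero forcing set of $G$ with a chronological list of forces. Replay these forces in $G-e$. Each vertex has at most as many neighbours in $G-e$ as in $G$, so every force $x\to y$ with $\{x,y\}\neq e$ remains valid: all neighbours of $x$ other than $y$ are still coloured at that time. The only force that can disappear is one along $e$ itself, say $u\to v$. In that case we take $S\cup\{v\}$ as the initial set, and by monotonicity the remaining forces colour $G-e$. Hence $Z(G-e)\le Z(G)+1$, which is $Z(G)-Z(G-e)\ge -1$.

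\textbf{Lower inequality $Z(G)\le Z(G-e)+1$.} Let $S$ be a minimum zero forcing set of $G-e$ with a chronological list of forces, and replay it in $G$. Forces performed by vertices other than $u$ and $v$ are unaffected. A force performed by $u$ in $G$ additionally requires $v$ to be coloured, and a force performed by $v$ requires $u$ to be coloured. We add to $S$ the endpoint that forces later, counting ``never forces'' as latest; say this is $u$.
\begin{itemize}
\item Since $u$ is coloured from the start, any force performed by $v$ is still valid in $G$.
\item If $u$ performs a force at all, then $v$ performed its force earlier, so $v$ was already coloured at that moment. Hence the force by $u$ is also valid in $G$.
\item If neither endpoint ever forces, nothing needs checking.
\end{itemize}
If $u$ was itself the target of some force in the original list, that force is now redundant and is skipped. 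By monotonicity, $S\cup\{u\}$ is a zero forcing set of $G$. Therefore $Z(G)\le Z(G-e)+1$, which is $Z(G)-Z(G-e)\le 1$.

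The main obstacle is the lower inequality. Naively adding an arbitrary endpoint can fail when that endpoint needs the other endpoint coloured before it forces. The fix is to add the endpoint that forces later (or not at all), and checking that this choice always works is the key step.
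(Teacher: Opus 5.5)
Your proof is correct. The paper gives no proof of its own and simply cites \cite{Edge}, and your argument is the standard one for this edge-spread bound: replay a chronological list of forces in the other graph and add one endpoint of $e$ to the initial set. Your rule of adding the endpoint that forces later (or never) is exactly the care needed for $Z(G)\le Z(G-e)+1$, since an arbitrary endpoint can fail. For example, let $G$ be a triangle $uvw$ with a pendant vertex $a$ at $u$, and let $e=uv$: the set $\{a\}$ forces $G-e$, but $\{a,u\}$ stalls in $G$.
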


\begin{lemma} \label{lem:Gk}
    For the graph $G_K$ we have $Z(G_k)=4k$.
\end{lemma}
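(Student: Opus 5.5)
The upper bound $Z(G_k)\le 4k$ is already exhibited in Figure~\ref{fig:Gk}, so what remains is the lower bound $4k\le Z(G_k)$. I would get it by deleting edges to reach a disjoint union of copies of $F_1$, where Lemma~\ref{lem:F1} gives the exact value, and then use Lemma~\ref{deletion} to control how much $Z$ can drop along the way.

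First, I would delete the $k$ edges of the cycle $C_{2k}$ that join consecutive copies of $F_1$. In the labelling of Figure~\ref{fig:Gk}, these are the edges $\{v_i^{11},v_{i+1}^{1}\}$ with indices taken modulo $k$. The resulting graph $H$ is exactly the disjoint union of $k$ copies of $F_1$. The zero forcing number is additive over connected components, since forcing never crosses between components and each component needs its own zero forcing set. Lemma~\ref{lem:F1} then gives $Z(H)=5k$.

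Second, I would add the $k$ edges back one at a time. By Lemma~\ref{deletion}, adding a single edge changes $Z$ by at most $1$ in either direction. This yields $Z(G_k)\ge 5k-k=4k$. Combined with the zero forcing set of size $4k$ from Figure~\ref{fig:Gk}, we get $Z(G_k)=4k$.

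The main obstacle I expect is checking that $Z$ of a disjoint union is the sum over components; this is routine but should be stated. A second point to confirm is that the edge deletions really leave copies of $F_1$, i.e.\ that the degree-two triangle vertices $v^1$ and $v^{11}$ of $F_1$ are exactly the endpoints of the cycle edges. Both checks are straightforward, so the argument reduces to Lemma~\ref{lem:F1} and Lemma~\ref{deletion}.
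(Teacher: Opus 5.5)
Your proposal is correct and follows the paper's proof essentially verbatim. You delete the $k$ linking edges $\{v_i^{11},v_{i+1}^{1}\}$ to reach $kF_1$ with $Z(kF_1)=5k$, apply the edge-deletion bound $k$ times to get $Z(G_k)\ge 4k$, and combine this with the explicit forcing set of size $4k$; your precise identification of which $k$ edges are removed (the paper loosely says ``the $k$ edges of the main cycle,'' although $C_{2k}$ has $2k$ edges) and your explicit remark on additivity of $Z$ over components are small gains in rigor.
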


\begin{proof}

Consider the graph $kF_1$ consisiting of $k$ disjoint copies of $F_1$. By Lemma \ref{lem:F1} $Z(F_1)=5$, and hence $Z(kF_1) = 5k$.  Furthermore, $kF_1$ can be obtained from $G_k$ by deleting the $k$ edges of the main cycle. Therefore, Lemma~\ref{deletion} implies that $-k\leq Z(G_k)-Z(kF_1)\leq k$. Substituting $Z(kF_1)=5k$ yields $4k\leq Z(G_k)\leq 6k$. Since we have already constructed a zero forcing set of size $4k$, it follows that $Z(G_k)=4k$. Finally $\alpha(G_k)=5k$ and therefore $Z(G_k) = \frac{4}{5} \alpha (G_k)$.
\end{proof}

This shows that there is greater boundary than the one proposed in \cite{DH}.  

\subsubsection{$Z(G)\leq\frac{3}{10}n$}
We similarly construct a family of connected claw-free cubic graphs that does not satisfy the bound in inequality \eqref{z<3/10n}. We first construct the graph $F_2$, shown in Figure \ref{fig:F2}, together with a zero forcing set.

\begin{lemma} \label{lem:F2}
   For the graph $F_2$ we have $Z(F_2)=3$.
\end{lemma}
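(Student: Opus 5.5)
The plan mirrors the argument used for Lemma~\ref{lem:F1}: an explicit zero forcing set of size $3$ gives $Z(F_2)\le 3$, and a maximum nullity argument via Theorem~\ref{mlessthanZ} gives $Z(F_2)\ge 3$.

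\textbf{Upper bound.} Take the three colored vertices of Figure~\ref{fig:F2} as $S$ and write out one complete forcing sequence. I expect this to go as in Figure~\ref{fig:F1}.
\begin{enumerate}
\item Start from a vertex of degree $2$ in a pendant triangle, or from a diamond vertex that already has two colored neighbors; such a vertex has exactly one uncolored neighbor, so it forces.
\item Propagate along the chain of units. Each time the chain enters a triangle or diamond through a single vertex, check that the previously colored vertices leave exactly one uncolored neighbor. This is the mechanism of Case B of the triangle rule and of the second half of the diamond rule, but here no new tokens are added.
\item Verify at the end that every vertex of $F_2$ is colored.
\end{enumerate}

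\textbf{Lower bound, main route.} Construct $A\in\mathcal{S}(F_2)$ with $\operatorname{rank}(A)=|V(F_2)|-3$, so that $M(F_2)\ge 3$ and hence $Z(F_2)\ge 3$ by Theorem~\ref{mlessthanZ}.
\begin{enumerate}
\item Start from a generalized Laplacian-type pattern, as for $F_1$: diagonal entries $2$ and edge entries $\pm 1$.
\item Tune the entries so that twin-like vertices have identical rows. In $F_1$ these were the two degree-$2$ triangle vertices, and the diamond vertices with the same closed neighborhood, such as $v^6,v^{13}$ and $v^9,v^{14}$. Each such pair of equal rows gives a null vector of the form $e_i-e_j$.
\item Exhibit three explicit linearly independent null vectors.
\item Verify that the remaining rows are independent, so the nullity is exactly $3$. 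A lower bound of $3$ on the nullity is all we actually need.
\end{enumerate}

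\textbf{Lower bound, alternative route.} As a cross-check, or as a replacement if the matrix is awkward, show directly that no $2$-set is zero forcing.
\begin{enumerate}
\item For the first force to happen, some colored vertex must have at most one uncolored neighbor.
\item In a graph with minimum degree at least $2$ and mostly cubic vertices, a $2$-set can trigger a force only if it is an edge at a degree-$2$ vertex, or an edge whose endpoint has degree $2$.
\item In each such case, check by hand, using the symmetry of $F_2$ to reduce the number of cases, that the forcing chain stalls on reaching a triangle or diamond entered through a single vertex. At that point the entry vertex has two uncolored neighbors.
\end{enumerate}

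\textbf{Main obstacle.} The expected difficulty is finding the matrix entries so that the rank drops exactly as needed without breaking the sign pattern. The off-diagonal entries must be nonzero precisely on the edges of $F_2$. This is why the $F_1$ matrix needed entries such as $0.9$ and $-0.1$ to fix the row dependencies around the degree-$2$ triangle. I would first force as many equal-row twin pairs as possible, and then solve for one remaining free parameter that makes a single further linear dependency appear.
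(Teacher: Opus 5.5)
Your proof is correct. The upper bound is the same as the paper's, but your lower bound takes a different route. The paper gets $Z(F_2)\ge 3$ by citing the DeLoss et al.\ table of minimum ranks of graphs on at most $7$ vertices, which gives $\operatorname{mr}(F_2)=4$ and hence $M(F_2)=3\le Z(F_2)$ by Theorem~\ref{mlessthanZ}. It writes down no matrix and makes no combinatorial check.

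Your alternative route is self-contained and is the cleanest complete proof. Starting from a $2$-set, any colored vertex of degree $3$ has at least two uncolored neighbors. So the first force must come from the unique degree-$2$ vertex of $F_2$ (the triangle vertex not joined to the diamond), paired with one of its two neighbors. By the symmetry exchanging the two sides, one case suffices. In that case the triangle gets colored, and its other two vertices then color the two ends of the diamond. Both ends now have the same two uncolored neighbors (the two inner diamond vertices), so the process stalls. The stall is therefore not at a unit ``entered through a single vertex'' as you anticipated, since the diamond is entered through both ends, but the conclusion is the same.

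This argument proves exactly what is needed. The paper's citation yields the stronger fact $M(F_2)=Z(F_2)$, at the cost of relying on external data.

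If you pursue your main matrix route instead, note that the twin trick gives less here than for $F_1$. The only closed twins in $F_2$ are the two inner diamond vertices, so equal rows supply a single null vector, and two more dependencies must be engineered. It can be done. Take every edge entry equal to $1$ except the entry $4$ on the edge joining the two inner diamond vertices. Take diagonal entries $2$ on the three triangle vertices, $1$ on the two diamond ends, and $4$ on the two inner diamond vertices. The resulting matrix lies in $\mathcal{S}(F_2)$, its two inner-diamond rows coincide, and it has nullity $3$.
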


\begin{proof}
The minimum ranks for all graphs on at most 7 vertices have been provided in \cite{MR}, and $\text{mr}(F_2)=4$.  This implies that $|V(F_2)|-\text{mr}(F_2)=7-4=3\leq Z(F_2)$ giving us our lower bound.  Additionally, the zero forcing set of size $3$ in Figure \ref{fig:F2} which implies $Z(F_2)=3$.
\end{proof}

\begin{figure}[h!]
    \centering
    \begin{tikzpicture}

\node                      (11) at (2.707,3.164) {};
\node   [fill={blue}]  (12) at (3.707,3.164) {};
\node [fill={blue}]                    (13) at (3.207,2.457) {};
\draw  (11) -- (12) -- (13) -- (11) ;

\node  (14) at (2.707,4.289) {};
\node  (15) at (3.207,4.664) {};
\node  (16) at (3.707,4.289) {};
\node  [fill={blue}] (17) at (3.207,3.914) {};
\draw  (15) -- (16) -- (17) -- (14) -- (15) -- (17);

\draw (11)--(14);
\draw (12)--(16);
    \end{tikzpicture}
    \caption{Graph $F_2$ with a zero forcing set of size 3}
    \label{fig:F2}
\end{figure}

We now construct a connected claw-free cubic graph $H_k$ by connecting $k+2$ copies of $F_2$ to the degree two vertices of a triangle chain of length $k$.
Thus $\mid V(H_{k})\mid = n = 14+10 k$.
Figure \ref{fig:Hk} illustrates $H_k$ where the colored vertices provide a zero forcing set.\\
\\

\begin{figure}[h!] 
\begin{center}
\begin{tikzpicture}

\begin{scope}
\node  [fill={blue}] (1) at (0,1) {};
\node  [fill={blue}] (2) at (0.375,1.5) {};
\node  (3) at (0.75,1) {};
\node  [fill={blue}]  (4) at (0.375,0.5) {};
\draw  (1) -- (2) -- (3) -- (4) -- (1) -- (3);

\node  (5) at (1.25,1.5) {};
\node  (6) at (1.957,1) {};
\node  (7) at (1.25,0.5) {};
\draw  (5) -- (6) -- (7) -- (5) ;

\draw  (2) -- (5);
\draw  (4) -- (7);

\node    [label=below:{$a_{1}$}]                   (8) at (2.707,1) {};
\node    [label=below:{$c_{1}$}]                   (9) at (3.207,1.707) {};
\node    [label=below:{$b_{1}$},fill={blue}]    (10) at (3.707,01) {};
\draw  (8) -- (9) -- (10) -- (8) ;

\node                     (11) at (2.707,3.164) {};
\node   [fill={blue}]  (12) at (3.707,3.164) {};
\node                     (13) at (3.207,2.457) {};
\draw  (11) -- (12) -- (13) -- (11) ;

\node  (14) at (2.707,4.289) {};
\node  (15) at (3.207,4.664) {};
\node  (16) at (3.707,4.289) {};
\node  [fill={blue}] (17) at (3.207,3.914) {};
\draw  (15) -- (16) -- (17) -- (14) -- (15) -- (17);

\draw (6)--(8);
\draw (9)--(13);
\draw (11)--(14);
\draw (12)--(16);

\node     [label=below:{$a_{2}$}]                   (08) at (4.707,1) {};
\node     [label=below:{$c_{2}$}]                   (09) at (5.207,1.707) {};
\node     [label=below:{$b_{2}$},fill={blue}]    (010) at (5.707,01) {};
\draw     (08) -- (09) -- (010) -- (08) ;

\node                       (011) at (4.707,3.164) {};
\node     [fill={blue}]  (012) at (5.707,3.164) {};
\node                       (013) at (5.207,2.457) {};
\draw    (011) -- (012) -- (013) -- (011) ;

\node  (014) at (4.707,4.289) {};
\node  (015) at (5.207,4.664) {};
\node  (016) at (5.707,4.289) {};
\node  [fill={blue}] (017) at (5.207,3.914) {};
\draw  (015) -- (016) -- (017) -- (014) -- (015) -- (017);

\draw (10) --(08);
\draw (09)--(013);
\draw (014)--(011);
\draw (016)--(012);

\node    [label=below:{$a_{k}$}]                   (008) at (6.707,1) {};
\node    [label=below:{$c_{k}$}]                   (009) at (7.207,1.707) {};
\node    [label=below:{$b_{k}$},fill={blue}]     (0010) at (7.707,01) {};
\draw    (008) -- (009) -- (0010) -- (008) ;

\node                     (0011) at (6.707,3.164) {};
\node     [fill=blue]   (0012) at (7.707,3.164) {};
\node                     (0013) at (7.207,2.457) {};
\draw (0011) -- (0012) -- (0013) -- (0011) ;

\node  (0014) at (6.707,4.289) {};
\node  (0015) at (7.207,4.664) {};
\node  (0016) at (7.707,4.289) {};
\node [fill={blue}]  (0017) at (7.207,3.914) {};
\draw  (0015) -- (0016) -- (0017) -- (0014) -- (0015) -- (0017);

\draw [dashed](010) -- (008);
\draw (009)--(0013);
\draw (0011)--(0014);
\draw  (0012)--(0016);

\node  (918) at (8.707,1) {};
\node  (919) at (9.414,1.5) {};
\node  [fill={blue}] (920) at (9.414,0.5) {};
\draw (918) -- (919) -- (920) -- (918) ;

\node  (921) at (10.414,1.5) {};
\node [fill={blue}] (922) at (10.39,1) {};
\node  (923) at (10.414,0.5) {};
\node  (924) at (10.789,1) {};
\draw  (922) -- (921) -- (924) -- (923) -- (922) -- (924);

\draw (0010)--(918);
\draw (919)--(921);
\draw  (920)--(923);

\end{scope}

\end{tikzpicture}
\caption{$H_{k}$ and a zero forcing set of size $3k+5$}
\label{fig:Hk}
\end{center}\vspace{-15pt}
\end{figure}

\begin{lemma} \label{lem:Hk}
For the graph $H_k$ we have $Z(H_k)=3k+5$.
\end{lemma}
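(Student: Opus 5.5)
Upper bound: we check that the colored set in Figure~\ref{fig:Hk} is a zero forcing set. Lower bound: we mimic the proof of Lemma~\ref{lem:Gk}, deleting edges to split $H_k$ into pieces whose zero forcing numbers are known, and then apply Lemma~\ref{deletion}.

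\emph{Upper bound.} The colored set has $3k+5$ vertices: one vertex $b_i$ on each chain triangle, two vertices of each of the $k$ attached copies of $F_2$, and $3+2$ vertices at the two ends. We propagate forces from left to right.
\begin{itemize}
\item In the left end block, the three colored diamond vertices force the fourth diamond vertex and then the adjacent triangle. That triangle forces $a_1$.
\item With $a_1,b_1$ colored, each of them has the single uncolored neighbor $c_1$, so $c_1$ becomes colored.
\item Now $c_1$ forces into its attached $F_2$ copy. Together with the two colored vertices there, this forces the whole copy, exactly as the size-$3$ forcing set of $F_2$ in Figure~\ref{fig:F2} does.
\item Then $b_1$ forces $a_2$, and the same step repeats down the chain.
\item Finally $b_k$ forces into the right end block, whose two colored vertices finish it.
\end{itemize}
Checking each force is routine, but I would write out the order of forces explicitly. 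This gives $Z(H_k)\le 3k+5$.

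\emph{Lower bound.} Delete from $H_k$ the following edges:
\begin{itemize}
\item the $k-1$ edges $\{b_i,a_{i+1}\}$ of the triangle chain;
\item the edge joining the left block to $a_1$;
\item the edge joining $b_k$ to the right block;
\item the $k$ edges joining each $c_i$ to its $F_2$ copy.
\end{itemize}
That is $2k+1$ edges in total. What remains is $k+2$ copies of $F_2$ (the two end blocks are also copies of $F_2$) and $k$ isolated triangles. Hence
\[
Z(H_k-E')=3(k+2)+k\cdot Z(K_3)=3k+6+2k=5k+6 .
\]
Lemma~\ref{deletion} then only gives $Z(H_k)\ge 5k+6-(2k+1)=3k+5$, and this matches the upper bound exactly.

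The main obstacle is the count. Lemma~\ref{deletion} loses one for every deleted edge, so I must confirm that exactly $2k+1$ edges are removed and that the end blocks really are copies of $F_2$, so that Lemma~\ref{lem:F2} applies to them. If the count turned out off by one, I would instead use maximum nullity: build a matrix in $\mathcal{S}(H_k)$ as a direct sum of the $F_2$ minimum-rank matrices and triangle blocks of nullity $2$, then adjust along the connecting edges so that nullity drops by at most one per edge. Combined with Theorem~\ref{mlessthanZ}, that gives the same lower bound.
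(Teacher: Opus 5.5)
Your proposal is correct and is essentially the paper's proof. It uses the same colored set of size $3k+5$ for the upper bound, and for the lower bound the same deletion of $2k+1$ edges, leaving $(k+2)F_2\cup kK_3$ with zero forcing number $5k+6$, followed by Lemma~\ref{deletion}.

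One small slip in your forcing description: when $a_1$ becomes colored, $b_1$ still has two uncolored neighbors, $c_1$ and $a_2$ (or the right end block when $k=1$). This does not break anything, because $a_1$ alone forces $c_1$, after which $b_1$ forces $a_2$. Your count of $2k+1$ edges is exact, so the maximum-nullity fallback is not needed.
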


\begin{proof}
The zero forcing set in Figure \ref{fig:Hk} gives the upper bound $Z(H_k)\leq 3k+5$. To establish the lower bound, we apply Lemma \ref{deletion}.  Consider the graph $(k+2)F_2\cup kT$ which consists of $k+2$ disjoint copies of $F_2$ and $k$ disjoint triangles. By Lemma \ref{lem:F2}, we have $Z((k+2)F_2\cup kT)=2(k)+3(k+2)=5k+6$. Furthermore, $(k+2)F_2\cup kT$ can be obtained from $H_k$ by deleting $2k+1$ edges: $k+2$ edges that connect the copies of $F_2$ to the triangles and $k-1$ edges that connect the triangles to one another. Therefore, Lemma~\ref{deletion} implies that $-2k-1\leq Z(H_k)-Z((k+2)F_2\cup kT)\leq 2k+1$. Using the value of $Z((k+2)F_2\cup kT)$, we have $3k+5\leq Z(H_k)\leq 7k+7$.  In particular, this gives the desired lower bound.
\end{proof}

Since $Z(H_k)=3k+5> \frac{3}{10}(10k+14)=3k+\frac{21}{5}$. This implies that the family of graphs $H_k$ act as a counter-example to inequality in question \ref{z<3/10n}.


\subsection{An upper bound for the zero forcing number of claw-free cubic graphs with Hamiltonian contraction multigraphs}

  Davila and Henning along with He et al. both consider the contraction multigraphs of claw-free cubic graphs containing only triangle units in \cite{DH} and \cite{He} respectively.  To obtain a contraction multigraph of a given claw-free cubic graph $G$ we first obtain $G'$ by removing all of the diamond units in each diamond chain and replacing them with edges.  Additionally we remove all of the diamond units in each diamond bracelet and adding an edge connecting the two link vertices of the triangle in the bracelet.  Note that $G'$ is a claw-free and diamond-free cubic graph. Then the contraction multigraph, denoted $M_{G'}$, is obtained from $G'$ by contracting every triangle unit into a single vertex.  We obtain an upper bound on the zero forcing number of claw-free cubic graphs with Hamiltonian contraction multigraphs.  We also note that if $G$ is Hamiltonian itself its contraction multigraph must also be Hamiltonian, so this bound also applies to Hamiltonian claw-free cubic graphs.\\

\begin{lemma}\label{no_cut_edge} 
If $G$ is a claw-free cubic graph such that $M_{G'}$ is Hamiltonian then $Z(G)\leq \frac{T}{2}+D+2$ where $T$ is the number of triangle units and $D$ is the number of diamond units in $G$.  
\end{lemma}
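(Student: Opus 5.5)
We first treat the diamond-free reduced graph $G'$ and then put the diamonds back. Every vertex of $G'$ lies in a triangle unit. The triangle units of $G'$ are exactly the $T$ triangles of $G$, so $M_{G'}$ is a cubic multigraph on $T$ vertices. Suppose $M_{G'}$ has a Hamiltonian cycle $x_1x_2\cdots x_Tx_1$, where $x_j$ corresponds to the triangle $T_j$. Each $T_j$ then has an entry vertex $u_j$ and an exit vertex $w_j$, with $w_j u_{j+1}$ an edge of $G'$ (indices mod $T$). The third vertex $y_j$ of $T_j$ is incident to a chord of the cycle. Thus $G'$ consists of a Hamiltonian cycle $u_1 y_1 w_1 u_2 y_2 w_2\cdots$ together with the triangle edges $u_jw_j$ and the chords $y_jy_{j'}$, which form a perfect matching on the $T$ vertices $y_j$.

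\textbf{Step 1: a forcing set of $G'$ of size $\frac{T}{2}+2$.} Colour $u_1,y_1$, and one endpoint of every chord $y_jy_{j'}$ not meeting $T_1$. The chord at $y_1$ is already handled because $y_1$ is coloured, so this is $2+(\frac{T}{2}-1)+1=\frac{T}{2}+2$ vertices, one of which is a spare. Walk around the cycle. Once $u_j$ and $y_j$ are coloured, $u_j$ forces $w_j$, and then $w_j$ has $u_{j+1}$ as its only uncoloured neighbour, so it forces $u_{j+1}$. At $T_{j+1}$ two cases arise.
\begin{itemize}
\item If $y_{j+1}$ is coloured (initially or earlier), continue.
\item If not, its chord partner $y_{j'}$ is in $S$ by construction. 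Either $y_{j'}$ lies in an already processed triangle, so all its other neighbours are coloured and it forces $y_{j+1}$. Or $T_{j'}$ lies ahead, in which case we use the triangle rule, Case A, at $u_{j+1}$ and must pay for it.
\end{itemize}
To avoid paying, we orient the choice: for each chord, put into $S$ the endpoint that occurs \emph{earlier} along the cycle order, starting after $T_1$. Then when we reach the later endpoint, the earlier one is fully processed and forces across the chord. When we reach the earlier endpoint, it is already coloured. In both cases the walk proceeds without adding tokens. After $T_T$, vertex $w_T$ only needs to confirm that $u_1$ is coloured, so everything is coloured. The one delicate point is the start, where $u_1,y_1$ are both tokens; this costs the "$+2$", and the spare token absorbs a possible parity or closing issue. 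So $Z(G')\le \frac{T}{2}+2$.

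\textbf{Step 2: reinserting diamonds.} Each diamond in a chain subdivides an edge of $G'$, and each diamond in a bracelet hangs off a triangle. We claim each diamond increases the bound by at most $1$. Run the same forcing order as in $G'$. When the process reaches an edge that was replaced by a diamond chain, the vertex on the forcing side forces the first vertex $a_k$ of the first diamond. The diamond rule, which adds $c_k$ to $S$, then colours the diamond and ends with $b_k$, which has a single outside neighbour and forces onward. Iterating along the chain, the chain costs exactly its number of diamonds and ends by forcing the original endpoint, so later forces of $G'$ are unchanged.

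The same works whichever side forces first, and also for chords across which nothing was forced. For those we fill the diamonds when one side is coloured, paying one token each, and the other endpoint still receives its colour in time; if anything it is received earlier, which only helps. For bracelets, the edge joining the triangle's two link vertices is replaced by the diamond string, handled identically. Summing gives $Z(G)\le \frac{T}{2}+2+D$.

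\textbf{Main obstacle.} The hardest part is Step 1: checking that the orientation rule for chords really lets every later endpoint be forced with no extra token. That vertex $y_{j'}$ has degree $3$ with neighbours $u_{j'},w_{j'}$ and the chord. It must have all of them except its partner coloured before we need it. This holds once $T_{j'}$ has been fully processed, which the cyclic order guarantees, except for chords touching $T_1$ and parallel edges of the multigraph. If $T_j$ and $T_{j+1}$ are joined by two edges, the chord coincides with a cycle-adjacent pair and the forcing is even easier. These boundary cases are where the two extra tokens are spent, and they need to be checked by hand.
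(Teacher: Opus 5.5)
Once the spare token is placed on $w_1$ (the fix below), your token set and forcing walk are exactly the paper's. The paper colours all of $T_1$, the earlier endpoint of every chord (its ``Type A'' vertex $c_i$), and one vertex $c_k$ per diamond. It then walks the Hamiltonian cycle using Cases A/B of the triangle rule and the diamond rule.

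\textbf{The gap is in the start.} You never say where the spare token goes, and without a suitable placement nothing forces at all:
\begin{itemize}
\item $u_1$ has two uncoloured neighbours, $w_1$ and $w_T$ (or a diamond vertex on that edge).
\item $y_1$ has $w_1$ and its chord partner uncoloured, and the partner is never a token.
\item Every other token $y_j$ has $u_j,w_j$ uncoloured.
\item Each diamond token $c_k$ has three uncoloured neighbours.
\end{itemize}
So your sentence ``once $u_j$ and $y_j$ are coloured, $u_j$ forces $w_j$'' is false for $j=1$. The spare is not there for a ``parity or closing issue''; there is none, since $w_T$ is coloured long before the walk returns. The spare is what starts the process. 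Put it on $w_1$, so that all of $T_1$ is coloured, as the paper does. Then $w_1$ forces forward, $u_1$ forces backward into $w_T$ (or into the diamond chain on that edge), and $y_1$ forces across its chord. From there your walk goes through verbatim. The boundary cases you defer to ``checking by hand'' become immediate. Your remark that parallel edges between consecutive triangles are harmless is correct, and more accurate than the paper's claim that $v_i'\neq v_{i\pm1}$.

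\textbf{Two smaller points.}
\begin{itemize}
\item Step 1 presupposes a triangle, so the case $T=0$ (a diamond necklace $N_D$) must be handled separately. Lemma~\ref{Ncubic} gives $Z(N_D)=D+2$; alternatively, take the tokens $a_1,c_1,d_1$ and $c_k$ for $k\ge 2$.
\item Bracelets cannot occur when $M_{G'}$ is Hamiltonian. A bracelet's triangle contracts to a vertex with only one edge to the rest of $M_{G'}$, and such a vertex lies on no Hamiltonian cycle. So your bracelet paragraph is vacuous, and your description of the chords as a perfect matching on the $y_j$ already assumes this.
\end{itemize}
Finally, in your forcing order every cycle edge and every chord has a force across it. Each diamond chain is therefore entered from a vertex whose other neighbours are all coloured. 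The hedge about chords ``across which nothing was forced'' is unnecessary, and as phrased (``fill the diamonds when one side is coloured'') it would not be justified anyway.
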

\allowdisplaybreaks
\proof Let $G$ be a Hamiltonian claw-free cubic graph. If $G$ is a diamond necklace (that is, $T=0$), then by Lemma \ref{Ncubic} $Z(G)=\frac{1}{4}n+2=D+2$ so the desired result follows immediately. Hence, we may assume for the remainder of the proof that $G$ contains at least two triangle units.

Now consider the contraction multigraph $M_{G'}$.  Since each vertex of $M_{G'}$ corresponds to a triangle in the original graph $G$, we have $|V(M_{G'})|=T$.  In addition, note that this graph is also cubic.

Since $M_{G'}$ is Hamiltonian, it contains a cycle that spans all of its vertices. Let the Hamiltonian cycle be labeled as $v_1v_2\ldots v_{T-1}v_Tv_1$, where $v_i$ is adjacent to $v_{i+1}$ modulo $T$.

Since each vertex of $M_{G'}$ has a degree of three, each $v_i$ must be adjacent to some $v_i'$ in addition to the two neighbors in the Hamiltonian cycle, so $v_i'\neq v_{i\pm1}$.
Moreover, $v_i\neq v_i'$, as otherwise either a loop would exist in the original graph $G$, or $v_i$ would correspond to the triangle unit of a diamond bracelet in $G$. The first case is impossible because $G$ does not have loops, the second case is impossible because $v_i$ is already adjacent to two distinct triangles, whereas a triangle unit of a diamond bracelet must be adjacent to two distinct diamonds, which would give $v_i$ degree four.

Therefore, the set $P=\{\{v_i, v_i'\} : 1\leq i<i'\leq T\}\subseteq E(M_{G'})$ induces a matching in $M_G'$ and $|P|=\frac{T}{2}$.

Now suppose $\{v_i, v_i'\}\in P$ and let $T_i=\{a_i, b_i, c_i\}$ and $T_i'=\{a_i', b_i', c_i'\}$ be the triangles in the graph $G$ represented by $v_i$ and $v_i'$ in $M_{G'}$. Then $T_i$ and $T_i'$ can be classified as \textit{Type A} and \textit{Type B} respectively corresponding to \textit{Case A} and \textit{Case B} of the triangle rule.  The motivation behind this is that if we initialize the forcing process at $T_1$ and follow the forcing chain along the Hamiltonian cycle, then $T_i$ will be completely filled before $T_i'$ is encountered. This allows for \textit{Case B} of the triangle rule to be applied.  

Let $\{c_i, c_i'\}\in E(G)$ be the edge corresponding to $\{v_i, v_i'\}$. To construct a zero forcing set for $G$, first color all three vertices of $T_1$ as well as the vertex $c_i$ of each $T_i$ that was classified as \textit{Type A} in the previous step. Next, for each diamond unit $D_j=\{a_j, b_j, c_j, d_j\}$ for $1\leq j\leq D$ where $\{a_j, b_j\}$ is the missing edge, color $c_j$. As established earlier, a diamond bracelet cannot exist in $G$, and therefore, every diamond unit lies on a diamond chain. Consider the forcing process along a chain from $T_1$ to $T_k$. Whenever a diamond unit $D_j$ is encountered, the entire unit will be forced through $c_j$ using the Diamond-Rule. Similarly, whenever a triangle unit $T_i$ is encountered, the entire unit will be forced through $c_i$ using the Triangle-Rule. The vertex $c_j$ will either have been colored initially (for \textit{Type A} triangles) or have been forced by a preceding triangle unit during earlier steps of the process for \textit{Type B} triangles).

Overall, this construction of a zero forcing set gives the upper bound $Z(G)\leq \frac{T}{2}+D+2$. \qed

\bigskip
{\bf Acknowledgment.}
Shahla Nasserasr was supported in part by NSF grant DMS-2532732.

\bibliographystyle{amsplain}

\begin{thebibliography}{10}

\bibitem{DH}
R.~Davila and M.~A.~Henning,
``Zero Forcing in Claw-Free Cubic Graphs,''
\emph{Bull. Malays. Math. Sci. Soc.}
\textbf{43} (2020), no.~1, 673--688.
doi:10.1007/s40840-018-00705-5.

\bibitem{HL}
M.~A.~Henning and C.~L\"owenstein,
``Locating-total domination in claw-free cubic graphs,''
\emph{Discrete Math.}
\textbf{312} (2012), no.~21, 3107--3116.

\bibitem{TS}
T.~Uno and H.~Satoh,
``An Efficient Algorithm for Enumerating Chordless Cycles and Chordless Paths,''
in \emph{Discovery Science},
Lecture Notes in Computer Science, vol.~8777,
Springer, 2014, pp.~313--324.
doi:10.1007/978-3-319-11812-3\_27.

\bibitem{He}
M.~He, H.~Li, L.~Song, and S.~Ji,
``The Zero Forcing Number of Claw-Free Cubic Graphs,''
\emph{Discrete Appl. Math.}
\textbf{359} (2024), 321--330.
doi:10.1016/j.dam.2024.08.011.

\bibitem{MR}
L.~DeLoss, J.~Grout, L.~Hogben, T.~McKay, J.~Smith, and G.~Tims,
``Table of Minimum Ranks of Graphs of Order at Most $7$ and Selected Optimal Matrices,''
\emph{arXiv preprint} arXiv:0812.0870.

\bibitem{Edge}
C.~Edholm, L.~Hogben, M.~Huynh, J.~LaGrange, and D.~Row,
``Vertex and Edge Spread of Zero Forcing Number, Maximum Nullity and Minimum Rank of a Graph,''
\emph{Linear Algebra Appl.}
\textbf{436} (2012), 4352--4372.
doi:10.1016/j.laa.2011.10.015.

\bibitem{MR2645093}
F.~Barioli, W.~Barrett, S.~M.~Fallat, H.~T.~Hall,
L.~Hogben, B.~Shader, P.~van den Driessche, and H.~van der Holst,
``Zero forcing parameters and minimum rank problems,''
\emph{Linear Algebra Appl.}
\textbf{433} (2010), no.~2, 401--411.
doi:10.1016/j.laa.2010.03.008.

\bibitem{West}
D.~B.~West,
\emph{Introduction to Graph Theory},
2nd ed.,
Prentice Hall, Upper Saddle River, NJ, 2001.

\end{thebibliography}

\end{document}